\title{sailing league problems}
\author{Robert Sch\"uler and Achill Sch\"urmann\\(University of Rostock)} 
\address{Institute of Mathematics, University of Rostock, 18051 Rostock, Germany}
\email{robert.schueler1989@gmail.com, achill.schuermann@uni-rostock.de}
\subjclass[2020]{05B05, 90C90, 90C20, 90C10}
\keywords{sailing league problems, resolvable block design,
       equitable coverings, OR in sports, quadratic programming, integer programming}
\date{\today}
\newtheorem{defi}{Definition}[section]
\newtheorem{definition}[defi]{Definition}
\newtheorem{theorem}[defi]{Theorem}
\newtheorem{corollary}[defi]{Corollary}
\newtheorem{lemma}[defi]{Lemma}
\newcommand{\Z}{{\mathbb{Z}}}
\newcommand{\Nteams}{N_{\mbox{\normalfont\footnotesize teams}}}
\newcommand{\Nflights}{N_{\mbox{\normalfont\footnotesize flights}}}
\newcommand{\Ninflight}{N_{\mbox{\normalfont\footnotesize inflight}}}
\newcommand{\Nraces}{N_{\mbox{\normalfont\footnotesize races}}}
\newcommand{\Ninrace}{N_{\mbox{\normalfont\footnotesize inrace}}}
\newcounter{alg}
\begin{document}

\begin{abstract} 
  We describe a class of combinatorial design problems which typically occur in professional sailing league competitions.
  We discuss connections to resolvable block designs and equitable
  coverings and to scheduling problems in operations research. 
  We in particular give suitable boolean quadratic and integer linear optimization problem
  formulations, as well as further heuristics and restrictions, that can be used to solve sailing league problems in practice.
  We apply those techniques to three case studies obtained from real
  sailing leagues and compare the results with previously used tournament plans. 
\end{abstract}

\maketitle
 
\centerline{(Dedicated to Karen Aardal on occasion of her 60th birthday.)}

\section{Introducing sailing league problems}

There is a long history of human competitions finding the \emph{best} in a certain category. 
While in some disciplines (as long jump or marathons) the winner can be determined by direct measurement, other disciplines only provide the possibility to directly compare participants (as football or chess).
The latter are often organized in form of tournaments or leagues where the winner is found after a number of direct comparisons.
Several different forms of sport leagues and tournaments have been developed, each with its own pros and cons
(see for instance \cite{kendall_et_al_2010} and \cite{knust-2023}).

A sailing league typically consists of a fixed number of teams that meet on certain events,
often lasting a weekend or several days.
During each event, there are a number of rounds, called \emph{flights}, with a fixed number of equally sized races, 
such that every team competes exactly once during a flight. 
For instance, in the finals of the 2021 European Sailing Champions League
32 teams were scheduled to compete in 18 flights, each consisting of 4 races with 8 teams
(see \cite{www-ChampionsLeague}).
In the first and second national German sailing league
there are 6 events,
where 18 teams compete in 16 flights, each with 3 races of 6 teams 
(see \cite{www-SailingBundesliga}).
Note that there should be a certain number of teams in each race to make it more exciting,
but races with too many or even all teams are typically not desirable in a sailing league.
So the teams are typically partitioned into 2,3 or 4 equally sized races for a flight.

The resulting ranking (for which the rankings of each race are added) of any league should be chosen as fair as possible.
So for a better comparability of results, it is 
desirable that the number of races that a team is paired against another team does not vary too much.
In a \emph{perfect pairing list} each team would compete the same
number of times with any other team.
Whether or not such a perfect schedule exists is a special type of
\emph{block design problem}. 
We will discuss corresponding theoretical results and connections to
combinatorial design theory in Section~\ref{sec:DesignTheory}.
It turns out that such perfect pairing lists do rarely exist.
In particular, for the parameters of the sailing leagues mentioned above there is no such perfect schedule
(see Corollary~\ref{cor:no_perfect_pairing_list}).

If for given parameters it is not possible to find a perfect schedule, it is natural
to consider the following, more general {\em sailing league problem}: In it we ask
to minimize the difference between upper and lower bound for
the number of competitions any team 
competes with any other team.  
By this, the pairing list is as fair as possible in the sense that it
is as close as possible to an ideal pairing list
in which each team competes with every other team
the same number of times.
As we show in Sections~\ref{sec:programs} 
there are natural formulations of sailing league problems as boolean quadratic and as 
integer linear programming problems.

In Section~\ref{sec:CaseStudies} we apply the described techniques and 
take a closer look at three case
studies. We analyze in particular the Asian Pacific and
European Champions Leagues and the German National Leagues
in their 2021 season.
It turns out that schedules could have been improved with respect to
the utility function of the sailing league problem.
However, despite heavy computational efforts, some of the related 
combinatorial problems with larger parameters 
appear to be quite difficult and remain still open.

As in several other sports league problems there is a close connection
to problems in combinatorial design theory, as we explain in
Sections~\ref{sec:parameters} and~\ref{sec:DesignTheory}.
The sailing league problem studied here is also related to round-robin tournaments, 
where after a predefined number of races, in which each participant competes against every other participant, a winner is chosen. 
For a nice survey on round-robin tournaments we refer the interested reader to~\cite{rasmussen_trick2008-round-robin-survey}.
While in the classical round-robin tournament only two participants are paired with one another, 
a sailing race pairs multiple teams against each other.
While this is true for other disciplines too (as in games such as Skat for three players or Catan for four players), 
the number of teams in a sailing race is not predefined by the form of the competition 
and can be chosen freely by the organizers with respect to the available sailing track.
Consequently, the sailing league problem does not fit directly into the  
existing round robin literature, as it has recently been catagorized in \cite{vBGSG_2020}.
However, our methods to approach the problem, using known results from
combinatorial design theory and integer programming formulations, are 
similar to known techniques for solving different round robin tournaments
(see \cite{anderson_1997_book}, \cite{briskorn_2008_book} and the references within Section~\ref{sec:int-refs-sec}).

\section{Parameters and Examples of sailing league problems} \label{sec:parameters}

We start our survey with a precise introduction of \emph{sailing league problems}. Given three positive integer parameters $\Nteams, \Nflights$, $\Ninrace$, where $\Ninrace$ is a divisor of $\Nteams$, we call the following setup a \emph{pairing list}:

\begin{enumerate}
\item There is a set $T$ of competing teams of cardinality $\Nteams$.

\item There is a list $R$ of races. Each race is a subset of teams of cardinality $\Ninrace$. 

\item The races are organized in a list of flights $F$ of cardinality $\Nflights$ and
each team participates in each flight exactly once. 
I. e, $F$ is a partition of $R$ and each flight $f\in F$ is a partition of $T$.

\end{enumerate}

Note that we use the term \emph{list} to emphasize that it is allowed to repeat races and even flights.
Mathematically, one could also use an \emph{incidence structure} or a \emph{multiset} here.

We derive some further dependent parameters that turn out to be useful:

\begin{enumerate}

\item The {\em number of races in a flight}: $$\Ninflight = \frac{\Nteams}{\Ninrace}$$

\item
  The {\em total number of races}:
  \begin{equation} \label{eqn:Nraces}
       \Nraces = |R| = \frac{\Nteams\cdot\Nflights}{\Ninrace}
  \end{equation}
  
\item
  For each pair of teams $t, t'\in T$ we have their {\em number of pairings}
  $$\lambda(t, t') = |\{r\in R \ : \ t\in r \text{ and } t'\in r\}|.$$
  
\item
  Derived from the number of pairings, we have the \emph{minimal}, the \emph{maximal} and the \emph{average} number of pairings:
  \begin{align*}
      \lambda_{\min} &= \min_{\substack{t, t'\in T\\ t\neq t'}} \lambda(t, t')\\
      \lambda_{\max} &=\max_{\substack{t, t'\in T\\ t\neq t'}} \lambda(t, t')\\
      \lambda &= \frac{1}{\Nteams\cdot(\Nteams -1)}\cdot \sum_{\substack{t, t'\in T\\t\neq t'}} \lambda(t, t')
  \end{align*}

We easily obtain the following:
\begin{lemma}
\label{lem:lambda_formula}
We have
$$\lambda = \frac{\Nflights\cdot(\Ninrace -1)}{\Nteams -1}$$
\end{lemma}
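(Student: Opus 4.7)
The plan is to compute $\sum_{t \neq t'} \lambda(t,t')$ by double counting, then divide by $\Nteams(\Nteams-1)$ according to the definition of $\lambda$.

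First I would fix a team $t \in T$ and count $\sum_{t' \neq t} \lambda(t,t')$, i.e. the total number of (team, race) incidences where $t$ is paired with some other team. By condition (3) of the pairing list definition, $t$ participates in exactly one race per flight, and there are $\Nflights$ flights, so $t$ belongs to exactly $\Nflights$ races. In each such race $t$ is paired with exactly $\Ninrace - 1$ other teams. Hence $\sum_{t' \neq t} \lambda(t,t') = \Nflights \cdot (\Ninrace - 1)$.

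Next I would sum over all $t \in T$ to obtain
\[
   \sum_{\substack{t,t' \in T \\ t \neq t'}} \lambda(t,t') \;=\; \Nteams \cdot \Nflights \cdot (\Ninrace - 1).
\]
Dividing both sides by $\Nteams \cdot (\Nteams - 1)$ and invoking the definition of $\lambda$ yields the claimed formula.

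I do not expect any genuine obstacle here; the only thing to be careful about is that the sum in the definition of $\lambda$ ranges over \emph{ordered} pairs $(t,t')$ with $t \neq t'$, which matches the counting above (each pairing in a race of size $\Ninrace$ contributes $2$ to the unordered count but is already counted twice as an ordered pair). So no additional factor of $2$ appears.
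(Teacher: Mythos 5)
Your proof is correct and follows essentially the same double-counting argument as the paper: fix a team, note it appears in exactly $\Nflights$ races with $\Ninrace-1$ opponents each, and divide by the number of (ordered) pairs. Your extra remark about ordered versus unordered pairs is accurate and just makes explicit a step the paper leaves implicit.
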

\begin{proof}
    Let $t\in T$ be a fixed team. 
    In each flight, $t$ participates in exactly one race against $(\Ninrace -1)$ opposing teams.
    Therefore, there is a total of $\Nflights\cdot(\Ninrace -1)$ pairings for each team.
    Since there are $(\Nteams -1)$ opposing teams we get the claimed identity for the average number of pairings.
\end{proof}
  
\end{enumerate}

\bigskip

For parameters $\Nteams, \Nflights$, $\Ninrace$ the corresponding \emph{sailing league problem} asks to find a pairing list with $\lambda_{\max} - \lambda_{\min}$ as small as possible.

\bigskip

Note that $\lambda_{\min}$ and $\lambda_{\max}$ are integers by definition, while $\lambda$ may not be an integer.
If we manage to find a pairing list with $\lambda = \lambda_{\min} = \lambda_{\max}$ and therefore $\lambda_{\max} - \lambda_{\min} = 0$, we call this pairing list \emph{perfect}. 
Obviously, a perfect pairing list is also optimal for the respective sailing league problem.

\bigskip

In order to have a compact human readable way to present different pairing lists of sailing leagues, we introduce the concept of a \emph{tournament plan} for a sailing league:

\begin{enumerate}
    \item A tournament plan is a matrix with $\Nflights$ rows and $\Nteams$ columns, where each row corresponds to a flight and each column corresponds to a competing team.
    \item Each flight consists of $\Ninflight$ races. We identify each race in a flight by a number between $1$ and $\Ninflight$.
    \item The entry in the tournament plan corresponding to team $t$ and flight $f$ is the number of the race (in $f$) that team $t$ competes in. Since $f$ is a partition of the teams, this number is uniquely determined.
\end{enumerate}

\bigskip

To make the theoretical discussions of this paper more vivid, we take a look at 
three different examples of real world sailing leagues.
Exemplarily, their parameters are chosen in such a way, that different techniques for their 
solution can be applied, respectively different phenomena can be explained.

In case of the national German (first and second) Sailing Leagues there
are six events 
with the scheduling parameters
\begin{equation} \label{eqn:parametersGNL}
\Nteams = 18,  
\quad
\Nflights = 16,
\quad
\Ninrace = 6
\quad
\mbox{and}
\quad
\Ninflight = 3,
\quad
\Nraces = 48,
\quad
\end{equation} 
for each event.
In contrast, events of the European Sailing Champions Leagues are realised with various different parameters.
Under several possibilities, we study the final of the European Sailing Champions League 2021 in Porto Cervo with parameters
\begin{equation} \label{eqn:parametersESCL21}
\Nteams = 32,  
\quad
\Nflights = 18,
\quad
\Ninrace = 8
\quad
\mbox{and}
\quad
\Ninflight = 4,
\quad
\Nraces = 72
.
\end{equation} 
Despite the $32$ possible slots for teams, in the actual event, only $29$ teams competed.
This gives a motivation to look at types of
relaxed sailing league problems, which we discuss briefly in
Section~\ref{sec:relaxations}.

For the sake of small parameters we will also look at a pairing list
for the final event in Newcastle, Australia, of the Asian Pacific Sailing Champions League in 2021.
The parameters were
\begin{equation} \label{eqn:parametersSCLAP21}
\Nteams = 10,  
\quad
\Nflights = 8,
\quad
\Ninrace = 5
\quad
\mbox{and}
\quad
\Ninflight = 2,
\quad
\Nraces = 16
.
\end{equation}
They used a base pairing list in which each team is in only 8 races/flights;
see Table~\ref{tab:tournament-plan-SCLAP21}.

\begin{table}[h]
	\begin{tabular}{c|cccccccccc}
		Flight/Team& 1& 2& 3& 4& 5& 6& 7& 8& 9& 10 \\
		\hline
		1 &1&1&1&1&1 &2&2&2&2&2\\
		2 &1&1&1&2&2 &2&1&2&1&2\\
		3 &1&2&2&1&1 &2&1&2&1&2\\
		4 &1&1&2&2&1 &2&1&2&1&2\\
		5 &1&1&1&2&1 &2&2&2&1&2\\
		6 &2&1&1&2&1 &2&1&2&1&2\\
		7 &2&1&1&1&2 &2&1&2&1&2\\
		8 &2&1&1&1&1 &2&1&2&2&2\\
	\end{tabular}
	\bigskip
	
	\caption{The Asia-Pacific Champions-League tournament plan
  2021 in Newcastle, Australia.
  }
	\label{tab:tournament-plan-SCLAP21}
	
\end{table}
	

In the actual event this base schedule was to be repeated up to 6 times.
So all parameters of the sailing league problem stay the same,
except for $\Nraces$ and $\Nflights$, which are multiplied by~6.

In Section~\ref{sec:CaseStudies} we will give an optimal solution 
for the sailing league problem of the German national leagues as well as for the
Asia-Pacific Champions-League with $8$ and $16$ flights. 
The parameters of the European Champions-League 
seem to make a solution very hard and the sailing league problem remains open in this case.

\section{Perfect pairing lists and connections to Design Theory} \label{sec:DesignTheory}

A simple necessary condition for the existence of a perfect pairing list is easily found:

\begin{lemma}
    If we have a perfect pairing list with parameters $\Nteams$, $\Nflights$, $\Ninrace$, then
    $\lambda$ is an integer.
    In particular, $(\Nteams -1)$ is necessarily a divisor of $\Nflights\cdot(\Ninrace -1)$.
\end{lemma}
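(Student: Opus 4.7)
The proof is essentially a direct combination of the definition of a perfect pairing list with the closed formula for $\lambda$ from Lemma~\ref{lem:lambda_formula}, so I would keep it short.

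First I would unpack what "perfect" means: by the definition given just before, a pairing list is perfect precisely when $\lambda_{\min} = \lambda_{\max} = \lambda$. Since $\lambda_{\min}$ and $\lambda_{\max}$ are minima and maxima of a finite collection of nonnegative integers $\lambda(t,t')$, they are integers by construction. Hence in the perfect case $\lambda$ coincides with an integer and is itself an integer. This handles the first assertion.

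For the ``in particular'' clause, I would simply substitute the expression from Lemma~\ref{lem:lambda_formula}, namely
\[
\lambda = \frac{\Nflights\cdot(\Ninrace -1)}{\Nteams -1}.
\]
Having just established that $\lambda \in \Z$, the fraction on the right must be an integer, which is equivalent to $(\Nteams-1)$ dividing the numerator $\Nflights\cdot(\Ninrace -1)$.

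There is no real obstacle here: the only thing to be careful about is to make sure the reader sees that integrality of $\lambda$ is not an additional hypothesis but a consequence of the perfect condition together with the integer-valuedness of the counts $\lambda(t,t')$. I would therefore state both steps explicitly rather than collapsing them, so that the logical flow \emph{perfect $\Rightarrow$ $\lambda_{\min}=\lambda_{\max}=\lambda$ $\Rightarrow$ $\lambda\in\Z$ $\Rightarrow$ divisibility} is transparent.
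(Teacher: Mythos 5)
Your proof is correct and follows the paper's own argument exactly: perfectness forces $\lambda = \lambda_{\min} = \lambda_{\max}$, which is an integer, and the divisibility statement then follows by substituting the formula of Lemma~\ref{lem:lambda_formula}. The only difference is that you spell out the intermediate steps slightly more explicitly than the paper does.
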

\begin{proof}
    In a perfect pairing list, we have $\lambda_{\min} = \lambda = \lambda_{\max}$ and thus $\lambda$ clearly is an integer.
    The rest follows by Lemma \ref{lem:lambda_formula}.
\end{proof}

By this result, we immediately have negative news for the parameters of our three study cases:

\begin{corollary}
\label{cor:no_perfect_pairing_list}
  There is no perfect pairing list with the parameters of the national German Sailing Leagues, the European Sailing Champions League and the Asian Pacific Sailing Champions League respectively.
\end{corollary}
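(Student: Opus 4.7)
The plan is simply to apply the preceding lemma to each of the three parameter sets in turn. Since that lemma states that a perfect pairing list forces $\lambda = \Nflights \cdot (\Ninrace - 1)/(\Nteams - 1)$ to be an integer, it suffices to exhibit, for each set of league parameters, a nontrivial remainder in this division. The contrapositive then immediately rules out the existence of a perfect pairing list.

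First I would substitute the German National League parameters $(\Nteams, \Nflights, \Ninrace) = (18, 16, 6)$ from~\eqref{eqn:parametersGNL}, obtaining $\lambda = 16 \cdot 5 / 17 = 80/17$; since $17$ is prime and does not divide $80$, this is not an integer. Next, for the European Sailing Champions League parameters $(32, 18, 8)$ from~\eqref{eqn:parametersESCL21}, I would compute $\lambda = 18 \cdot 7 / 31 = 126/31$, which fails to be an integer because $31$ is prime and $126 = 4 \cdot 31 + 2$. Finally, for the Asian Pacific Sailing Champions League parameters $(10, 8, 5)$ from~\eqref{eqn:parametersSCLAP21}, I would get $\lambda = 8 \cdot 4 / 9 = 32/9$, which is not an integer since $9 \nmid 32$.

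There is no real obstacle here: the corollary is a direct numerical consequence of the necessary divisibility condition established in the preceding lemma. The only thing worth flagging is that the argument is purely arithmetic and relies on nothing beyond checking that $\Nteams - 1$ fails to divide $\Nflights \cdot (\Ninrace - 1)$ in each of the three cases; no combinatorial construction or obstruction needs to be invoked.
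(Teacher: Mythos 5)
Your proposal is correct and matches the paper's own proof exactly: both apply the preceding lemma and verify that $\lambda$ equals $\tfrac{80}{17}$, $\tfrac{126}{31}$ and $\tfrac{32}{9}$ for the three parameter sets, none of which is an integer. No differences worth noting.
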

\begin{proof}
  The corresponding values of $\lambda$ are $\frac{80}{17}$, $\frac{126}{31}$ and $\frac{32}{9}$ respectively, which are not integers.
\end{proof}

\bigskip

We note that it is not difficult to find \emph{any} perfect pairing list -- 
even when some parameters are given.
We can for example do one of the following:

\begin{enumerate}
    \item[(a)]
           Let all teams participate in every race, i.e. $\Ninrace = \Nteams$ and $\Nraces = \Nflights$.
    \item[(b)] 
           For given $\Nteams$ and $\Ninrace$ perform \emph{all possible} flights.
\end{enumerate}

However, these trivial perfect pairing lists are often not useful in practice.
A race between too many participants as in (a)
is apparently hard to realize under fair conditions. 
That is the main reason why sailing leagues are organized as they are.
The other possibility in (b), 
leads to an extremely high number of flights, due to a combinatorial explosion.
We have a total of 
$$\frac{\Nteams !}{(\Ninrace !)^{\Ninflight} \cdot \Ninflight!}$$
flights, where we use $\Ninflight=\Nteams/\Ninrace$.
For example, if we have $\Nteams=18$ teams and $\Ninrace=6$ teams per race, as in
the national German Sailing Leagues, the pairing list with all possible flights
would have a total of $2,858,856$ flights. 

\bigskip

Problems similar to finding a perfect paring list have been studied in
mathematical \emph{design theory}. This theory is also used in other
contexts to create team schedules or organizing experiments. For a
nice introduction to design theory, we refer to
\cite{beth_jungnickel_lenz1999design_theory}  and \cite{beth_jungnickel_lenz1999design_theoryII}.
A very good overview is provided by \cite{HandbookCombDesigns_2006}.

One important concept of design theory are \emph{block designs}
(sometimes also called \emph{balanced incomplete block designs} and abbreviated as \emph{BIBD}).
For a block design, assume that we have a finite set whose elements are called \emph{points},
  together with a collection of \emph{blocks} which are subsets of the points.
Each block contains the same number of points as every other block,
and each pair of points is also contained in the same number of blocks.

If we interpret points to be the teams participating in the
sailing league and the blocks to be the different races, it is
apparent that finding a block design and a perfect pairing list are two strongly related problems.

For our further discussion, 
we first give a precise definition of block designs, 
using the naming conventions of this paper:

\begin{definition}

	Let $\Nteams,\Ninrace,\lambda$ be positive integers.
	A \emph{$(\Nteams,\Ninrace,\lambda)$-block design} is a tupel $(T, R)$
	where $T$ is a set of teams and $R$ is a list (an incidence structure) of subsets of $T$, such that:

	\begin{enumerate}
        \item $|T| = \Nteams$,
        \item $|r| = \Ninrace$ for all $r\in R$,
        \item $\lambda = \lambda(t, t')$ for all $t, t' \in T$, $t\neq t'$.
	\end{enumerate}
\end{definition}

The elements of $T$ are also called \emph{points} of the design,
while the elements of $R$ are called \emph{blocks} of the design.
Note that in design theory literature, typically the cardinality of the pointset is denoted by $v$, the block size is $k$ and therefore the 
notation of a $(v, k, \lambda)$-design is typically used.
With our notation here we emphasize the connection to pairing lists.

Note that a pairing list is neither a stronger nor a weaker concept than a block design, since on the one hand it allows for different values of $\lambda(t, t')$ while on the other hand the races have to be organized in flights. 
From a perfect pairing list we also get a block design.
However, from an arbitrary block design 
we do not necessarily obtain a perfect pairing list,
as it is not always possible
to group the races into equally sized flights
-- even if the block (race) size $k=\Ninrace$ divides the number of teams $v=\Nteams$.

There are design theoretic concepts to attack this problem.
A subset of $R$ is called a \emph{parallel class} or \emph{resolution class} of the design,
if it is a partition of $T$. If the races can be partitioned into
parallel classes, the block design is called \emph{resolvable}
(sometimes abbreviated to RBIBD, standing for \emph{resolvable
  balanced incomplete block designs}, see \cite{Handbook_ChapterResolvableDesigns_2006}).
As a consequence, a perfect pairing list
with parameters $\Nteams$, $\Nflights$, $\Ninrace$ (and $\lambda_{\min} = \lambda = \lambda_{\max}$)
describes the same concept as
a resolvable ($\Nteams$, $\Ninrace$, $\lambda$)-block design,
and the terms \emph{flight} and \emph{parallel class} are interchangeable.

If we require from a pairing list that every pair of teams competes at
least once in some flight ($\lambda_{\min}\geq 1$), we are asking for a block design that is a 
\emph{resolvable covering of pairs}
(see \cite{Handbook_ChapterCoverings_2006}, \cite{Handbook_SchedulingATournament_2006}). 
Asking for a minimum number of parallel classes (flights)
to obtain such a resolvable covering of pairs has been studied in \cite{Haemers-1999}.
With the additional requirement that every pair of teams competes
exactly once or twice against each other a resolvable covering of pairs
is called \emph{equitable} (see \cite{DHP-2003}).
In this case, we have $1\leq \lambda_{\min} \leq \lambda_{\max}\leq 2$
and therefore $\lambda_{\max}-\lambda_{\min}\leq 1$.
Hence, the sailing league problem is a generalization of an equitable
covering in a sense.  
In design theoretic terms we are asking for a resolvable covering of pairs minimizing $\lambda_{\max}-\lambda_{\min}$
(is \emph{``as equitable as possible''}), for a given 
number of points (teams) $v=\Nteams$, block (race) size $k=\Ninrace$ and parallel classes (flights).

\bigskip

What can we use from the theory of resolvable block designs to 
solve our sailing league problems in general?
On the one hand, we can certainly 
obtain existence or nonexistence results of perfect pairing lists 
for certain parameters from corresponding results about resolvable block designs.
On the other hand, we can use the existence of such designs 
for finding fairly good pairing lists with slightly 
different parameters.

We start with some necessary conditions on parameters for the existence 
of block designs. We note that the formula of Lemma~\ref{lem:lambda_formula} remains
valid for general block designs, even if the races (blocks) 
can not be grouped into flights (parallel classes).
The parameter 
$$\Nflights=\frac{\lambda\cdot (\Nteams-1)}{\Ninrace-1}$$
is in general equal to the number of races that contain a given team
-- and each team in a block design is contained in the same number of races
(\cite[Thm. 2.10.]{beth_jungnickel_lenz1999design_theory}).
Using this equality in \eqref{eqn:Nraces} we obtain another known formula
for a $(\Nteams,\Ninrace,\lambda)$-block design:  
$$\Nraces = \frac{\lambda\cdot \Nteams\cdot(\Nteams-1)}{\Ninrace\cdot(\Ninrace-1)}$$ 
holds for the number of races (blocks) of the design.
Since the quantities $\Nraces$ and $\Nflights$ have to be integers, 
we can easily deduce the following:

\begin{corollary}{({\cite[Cor. 2.11.]{beth_jungnickel_lenz1999design_theory}})}

	If there exists an $(\Nteams, \Ninrace,\lambda)$-block design, then

	\begin{enumerate}
		\item 
		  $(\Ninrace-1)\;$ divides $\;\lambda\cdot (\Nteams-1)$.
		\item
		  $\Ninrace\cdot(\Ninrace-1)\;$ divides $\;\lambda\cdot\Nteams\cdot(\Nteams-1)$.
	\end{enumerate}

\end{corollary}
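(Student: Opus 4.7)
The plan is to read the two divisibility statements directly off the two counting identities that have just been established for a $(\Nteams,\Ninrace,\lambda)$-block design, exploiting only that both counts are cardinalities of finite sets and therefore nonnegative integers.

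First I would recall the identity
$$\Nflights = \frac{\lambda\cdot(\Nteams-1)}{\Ninrace-1},$$
which the text has just noted (via Lemma~\ref{lem:lambda_formula} together with \cite[Thm.~2.10.]{beth_jungnickel_lenz1999design_theory}) equals the number of blocks containing any fixed team $t\in T$. Since this number is the cardinality of the set $\{r\in R : t\in r\}$, it is a nonnegative integer, and so $(\Ninrace-1)$ must divide $\lambda\cdot(\Nteams-1)$. This yields~(1).

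Next I would use the identity
$$\Nraces = \frac{\lambda\cdot\Nteams\cdot(\Nteams-1)}{\Ninrace\cdot(\Ninrace-1)}$$
for the total number of blocks, which follows by substituting the above formula for $\Nflights$ into equation~\eqref{eqn:Nraces}. Again $\Nraces=|R|$ is a nonnegative integer, so $\Ninrace\cdot(\Ninrace-1)$ divides $\lambda\cdot\Nteams\cdot(\Nteams-1)$, giving~(2).

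Since both counting identities have already been derived in the paragraph preceding the statement, there is really no obstacle at all: the proof is a one-line invocation of the integrality of $\Nflights$ and $\Nraces$. The only point worth being careful about is that the formula for $\Nflights$ requires that every team is contained in the same number of blocks, which is precisely the content of the cited theorem from \cite{beth_jungnickel_lenz1999design_theory} and which depends on the defining property $\lambda(t,t')=\lambda$ of a block design.
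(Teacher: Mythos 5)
Your proposal is correct and follows exactly the paper's own argument: the paper derives the two identities for $\Nflights$ and $\Nraces$ in the paragraph immediately preceding the corollary and then deduces the divisibility conditions from the integrality of these two counts, which is precisely what you do. The only difference is presentational -- the paper folds the argument into the surrounding prose rather than a proof environment.
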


Note that in the case that $\Ninrace$ divides $\Nteams$, only the first condition is relevant as the second follows directly.

In case we have a perfect pairing list, the number of races (blocks) each team competes in is equal to the number of flights (parallel classes). 
We can find another necessary conditions for perfect pairing lists from the theory of resolvable block designs:

\begin{corollary}
  {(\cite[Equation 1.20]{bose1942note})}
\label{cor:necessary_conditions_resolvable_block_design}

	If there exists a perfect pairing list with parameters $\Nteams$, $\Nflights$, $\Ninrace$, then

		$$\Nraces \geq \Nteams + \Nflights -1.$$

\end{corollary}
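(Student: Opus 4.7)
My plan is to prove the inequality by a linear-algebraic rank argument on the incidence matrix of the pairing list, in the spirit of Bose's original proof. Let $N$ denote the $\Nteams\times \Nraces$ matrix with $N_{t,r}=1$ if team $t\in T$ is in race $r\in R$ and $N_{t,r}=0$ otherwise. I will show that $\rank(N)=\Nteams$ and simultaneously exhibit $\Nflights-1$ linearly independent vectors in $\ker N$; combining these via rank--nullity immediately yields
\[
\Nraces \;=\; \rank(N)+\dim(\ker N) \;\geq\; \Nteams+(\Nflights-1).
\]

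First I would pin down the rank of $N$ by computing $NN^{T}$. Since in a perfect pairing list each team is in exactly $\Nflights$ races (one per flight) and each pair of distinct teams is in exactly $\lambda$ common races, we obtain
\[
NN^{T}=(\Nflights-\lambda)\,I_{\Nteams}+\lambda\,J_{\Nteams},
\]
where $J_{\Nteams}$ is the all-ones matrix. Its eigenvalues are $\Nflights-\lambda$ with multiplicity $\Nteams-1$, and $\Nflights+\lambda(\Nteams-1)=\Nflights\cdot\Ninrace$ with multiplicity $1$, using Lemma~\ref{lem:lambda_formula} for the second value. Both are strictly positive whenever $\Ninrace<\Nteams$, so $NN^{T}$ is positive definite and $\rank(N)=\Nteams$. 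This is essentially Fisher's inequality, and already accounts for the ``$\Nraces\geq\Nteams$'' portion of the target bound.

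Next I would exploit the resolvability. For each flight $f\in F$ let $\chi_f\in\R^{\Nraces}$ be the $0/1$ indicator vector of the races contained in $f$. Since a flight is a partition of $T$, summing the columns of $N$ indexed by the races of $f$ yields the all-ones column, i.e.\ $N\chi_f=\mathbf{1}_{\Nteams}$ for every $f$. Fixing one flight $f_0$, each of the $\Nflights-1$ vectors $\chi_f-\chi_{f_0}$ with $f\neq f_0$ therefore lies in $\ker N$. Because distinct flights partition $R$, the supports of the $\chi_f$ are pairwise disjoint, which makes these $\Nflights-1$ difference vectors linearly independent. Hence $\dim(\ker N)\geq \Nflights-1$ and the conclusion follows.

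The only subtle point I anticipate is ensuring $\Nflights-\lambda>0$, i.e.\ excluding the degenerate case $\Ninrace=\Nteams$ (one race per flight, containing all teams), in which the stated bound is in fact vacuous or false. This is exactly the trivial construction (a) discussed after Corollary~\ref{cor:no_perfect_pairing_list} and does not arise for genuine sailing league parameters, so it is harmless to assume $\Ninrace<\Nteams$ throughout. The remainder is mechanical linear algebra, and the two ingredients --- Fisher's rank computation for $N$ and the resolvability-induced dependencies --- combine cleanly.
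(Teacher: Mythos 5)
Your argument is correct. Note that the paper does not prove this statement itself --- it is quoted directly from Bose --- and your rank/kernel argument is essentially Bose's original proof of the cited inequality: positive definiteness of $NN^{T}=(\Nflights-\lambda)I+\lambda J$ gives $\rank(N)=\Nteams$ (Fisher's inequality), and the $\Nflights-1$ independent vectors $\chi_f-\chi_{f_0}$ in $\ker N$ coming from resolvability supply the rest via rank--nullity. All steps check out, including the observation that repeated races or flights (the paper works with lists, not sets) do not disturb either the computation of $NN^{T}$ or the disjointness of the supports of the $\chi_f$. The one caveat you raise is real: for $\Ninrace=\Nteams$ one has $\Nflights-\lambda=0$, and the inequality as printed actually fails for $\Nteams\geq 2$ (there $\Nraces=\Nflights$), so the implicit hypothesis $\Ninrace<\Nteams$ --- standard for block designs and automatic for genuine sailing league parameters --- is genuinely needed, and you handle it correctly.
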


\bigskip

Can we hope for a perfect pairing list if we slightly
modify the parameters in our three examples?
For this, we examined all possible nearby parameters, in ranges 
$\Nteams \pm 3$, $\Nflights \pm 3$, $\Ninrace \pm 2$, where $\Nteams$, $\Nflights$, $\Ninrace$ are set to be the parameters of the German Sailing Leagues, the European Champions Leagues and the Asian Pacific Sailing Champions Leagues respectively.
After applying the necessary conditions above, there are only $13$ cases left to discuss. Most of these cases can then be handled by results from design theory literature. A summarized discussion of these parameters is displayed in 
Table~\ref{tab:legal_params}. For the convenience of the reader who hopes to apply these findings directly, 
we present related tournament plans of known existing 
perfect pairing lists in Appendix\ref{app:perfect_pairing_lists}
(Tables~\ref{tab:tournament-plan-8teams-7flights-4inrace} to~\ref{tab:tournament-plan-20teams-19flights-5inrace}).

\begin{table}

	\begin{tabular}{cccc|ccc}
		$\Nteams$&$\Nflights$&$\Ninrace$&$\lambda$&ppl?&Table&Reference\\
		\hline
		8&7&4&3&Yes&\ref{tab:tournament-plan-8teams-7flights-4inrace}&\cite[Table 9.1]{beth_jungnickel_lenz1999design_theoryII}\\
        9&8&3&2&Yes&\ref{tab:tournament-plan-9teams-8flights-3inrace}& \cite[Table 9.1]{beth_jungnickel_lenz1999design_theoryII}\\
        10&9&5&4&No&&\cite[Lemma 2.10]{miao1994existence-of-resolvable-bibds}\\
        12&11&3&2&Yes&\ref{tab:tournament-plan-12teams-11flights-3inrace}&\cite[Theorem 4]{hanani1974-on-resolvable-balanced-incomplete-block-designs} \\
        12&11&4&3&Yes&\ref{tab:tournament-plan-12teams-11flights-4inrace}&\cite[Table 1]{baker1983resolvable_bibd_and_sols}\\
        12&11&6&5&Yes&\ref{tab:tournament-plan-12teams-11flights-6inrace}&\cite[Table 3]{baker1983resolvable_bibd_and_sols}\\
		\hline
        15&14&5&4&No&&\cite{kasi-ostergard2001-there-exists-no-15-5-4-RBIBD}\\
        16&15&4&3& Yes&\ref{tab:tournament-plan-16teams-5flights-4inrace}&{\cite[Thm. 6.9.2.]{calinski_tadeusz_kageyama_block_designsII}}\\
        16&15&8&7&Yes&\ref{tab:tournament-plan-16teams-15flights-8inrace}& {\cite[Thm. 6.9.2.]{calinski_tadeusz_kageyama_block_designsII}}\\
        18&17&6&5&Yes&\ref{tab:tournament-plan-18teams-17flights-6inrace}&{\cite{Kageyama_1983},\cite[Table
                                                                           3]{baker1983resolvable_bibd_and_sols}}\\
        20&19&4&3&Yes&\ref{tab:tournament-plan-20teams-19flights-4inrace}&\cite[Theorem 6.5]{baker1983resolvable_bibd_and_sols}\\
        20&19&5&4&Yes&\ref{tab:tournament-plan-20teams-19flights-5inrace}&\cite[Lemma 2.10]{miao1994existence-of-resolvable-bibds}\\
        \hline
        35&17&7&3&unknown\\
	\end{tabular}
	
	\bigskip
	
	\caption{Parameters near the parameters of the three exemplary sailings leagues that satisfy all necessary conditions. The column labeled \emph{ppl?} indicates whether there exists a perfect pairing list with these specific parameters.
    }
	\label{tab:legal_params}
	
\end{table}

It turns out that for the parameters of the German national league and the Asian Pacific Champions League, there are interesting \enquote{nearby results} in 
Table~\ref{tab:legal_params}.

For the German national leagues we have an existence result 
for a corresponding perfect pairing list, with just one flight more and otherwise the same parameters. By removing an arbitrary flight from this perfect pairing list 
of Table~\ref{tab:tournament-plan-18teams-17flights-6inrace},
we easily obtain an optimal pairing list with $\lambda_{\max} - \lambda_{\min} = 1$.
The proof of optimality follows from Corollary \ref{cor:no_perfect_pairing_list}.

For the Asian Pacific Champions League we have a nearby nonexistence
result of a $(10, 9, 4)$-block design,
for a putative perfect paring list (\cite[Lemma 2.10]{miao1994existence-of-resolvable-bibds}), 
with again just one flight more and otherwise the same parameters.
This proves that there can not exist a pairing list with the parameters
of the Asian Pacific Champions League having 
$\lambda_{\max} - \lambda_{\min} = 1$.
Otherwise we could add an arbitrary flight to an existing perfect pairing list 
and obtain this utility value.
As we will explain in the next section, 
it is possible to show $\lambda_{\max} - \lambda_{\min} = 3$
using suitable optimization software.
Note that this in return gives an alternative computational proof of the 
nonexistence result in~\cite{miao1994existence-of-resolvable-bibds}.

We end this section by subsuming the above observation on removing or adding arbitrary flights to an (putative) existing pairing list:

\begin{lemma}
\label{lem:add_or_remove_rows}
    Let $P$ be a pairing list for parameters $\Nteams$, $\Nflights$, $\Ninrace$ and let $k$ be an integer.
    If $k < \Nflights$, then there exists a pairing list $P'$ for parameters
    $\Nteams$, $\Nflights -k$, $\Ninrace$
    with
    $$\lambda_{\max}(P) - \lambda_{\min}(P) -k \; \leq \; \lambda_{\max}(P') - \lambda_{\min}(P') \; \leq \; \lambda_{\max}(P) - \lambda_{\min}(P) + k.$$
    Furthermore, for arbitrary $k$, there also exists a pairing list $P''$ for parameters $\Nteams$, $\Nflights + k$, $\Ninrace$ with 
    $$\lambda_{\max}(P) - \lambda_{\min}(P) -k \;  \leq \; \lambda_{\max}(P'') - \lambda_{\min}(P'') \; \leq \; \lambda_{\max}(P) - \lambda_{\min}(P) + k.$$
    
    If in addition $\Nteams > \Ninrace$, $k=1$ and $P$ is a perfect pairing list, 
    then $P'$ as well as $P''$ are optimal with 
    $$\lambda_{\max}(P') - \lambda_{\min}(P') \; = \; 1 \; = \; \lambda_{\max}(P'') - \lambda_{\min}(P'').$$
\end{lemma}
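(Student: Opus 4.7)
The plan is to construct $P'$ by simply deleting any $k$ flights from $P$ (possible since $k < \Nflights$) and $P''$ by appending $k$ further flights to $P$. Any partition of $T$ into $\Ninflight$ blocks of size $\Ninrace$ is a valid flight, and such partitions exist because $\Ninrace \mid \Nteams$; a concrete choice is simply to repeat one of the existing flights $k$ times. This immediately yields pairing lists for the triples $(\Nteams, \Nflights - k, \Ninrace)$ and $(\Nteams, \Nflights + k, \Ninrace)$ respectively.

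The key observation driving the bounds is that each single flight contributes either $0$ or $1$ to $\lambda(t,t')$ for each pair $(t,t')$, according to whether $\{t,t'\}$ lies in some common race of that flight. Hence deleting $k$ flights lowers each $\lambda(t,t')$ by some integer in $\{0,1,\ldots,k\}$, and appending $k$ flights raises it by an integer in the same range. From this one reads off
\begin{align*}
\lambda_{\max}(P)-k &\;\leq\; \lambda_{\max}(P') \;\leq\; \lambda_{\max}(P),\\
\lambda_{\min}(P)-k &\;\leq\; \lambda_{\min}(P') \;\leq\; \lambda_{\min}(P),
\end{align*}
and the symmetric upward-shifted inequalities for $P''$. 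Subtracting then immediately gives the stated upper bounds $\lambda_{\max}(P')-\lambda_{\min}(P') \leq (\lambda_{\max}(P)-\lambda_{\min}(P))+k$ and the analogue for $P''$. To secure the asserted lower inequality $\lambda_{\max}(P)-\lambda_{\min}(P) \leq \lambda_{\max}(P')-\lambda_{\min}(P')$, the $k$ flights to be removed (respectively appended) must be chosen with care: the natural move is to avoid deleting any flight whose races cover a pair currently attaining $\lambda_{\max}$, and to avoid appending flights whose races contain a pair currently attaining $\lambda_{\min}$. I expect verifying that such a choice is always available to be the only delicate point in the argument.

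For the final assertion, suppose additionally that $\Nteams > \Ninrace$, $k = 1$, and $P$ is perfect. Then Lemma~\ref{lem:lambda_formula} gives $\lambda(P) = \Nflights(\Ninrace-1)/(\Nteams-1) \in \Z$. For $P'$ and $P''$ the corresponding average pairing values are $(\Nflights \mp 1)(\Ninrace-1)/(\Nteams-1)$, which differ from $\lambda(P)$ by $(\Ninrace-1)/(\Nteams-1)$; this offset lies strictly in $(0,1)$ under $\Nteams > \Ninrace$, so neither modified average is an integer. Hence neither $P'$ nor $P''$ can be perfect, which forces $\lambda_{\max}-\lambda_{\min} \geq 1$ in each case. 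Combined with the upper bound $\leq 1$ from the main part, equality follows, and optimality is immediate since no pairing list with these parameters can achieve discrepancy $0$.
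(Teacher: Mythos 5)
Your construction (delete $k$ flights; append $k$ copies of an existing flight), the observation that each flight contributes $0$ or $1$ to every $\lambda(t,t')$, the resulting upper bounds, and the divisibility argument for the final assertion all coincide with the paper's own proof, and those parts are correct.

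The genuine gap is the one you yourself flag: you never establish the left-hand inequalities $\lambda_{\max}(P)-\lambda_{\min}(P)\leq\lambda_{\max}(P')-\lambda_{\min}(P')$, deferring instead to a hoped-for careful choice of which flights to delete or append. Two things are worth knowing here. First, the paper's proof is no better on this point: it removes \emph{arbitrary} flights and asserts that the inequalities follow because each count changes by at most $k$, but from your four displayed bounds one only gets $\lambda_{\max}(P)-\lambda_{\min}(P)-k\leq\lambda_{\max}(P')-\lambda_{\min}(P')$. Second, your proposed repair cannot succeed, because the lower bound is false even as an existence statement: take $\Nteams=4$, $\Ninrace=2$, $\Nflights=2$ with the flight $\{\{1,2\},\{3,4\}\}$ repeated twice (repetition of flights is explicitly permitted), so that $\lambda_{\max}(P)-\lambda_{\min}(P)=2$; every pairing list with a single flight has $\lambda_{\max}=1$ and $\lambda_{\min}=0$, hence utility $1<2$, so no admissible $P'$ exists. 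The step you isolate as delicate is therefore not completable; the correct general statement is the two-sided bound with $\pm k$ on the left, and only the upper bound --- which you do prove, and which is all that the final assertion and the paper's subsequent applications of the lemma require --- survives.
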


\begin{proof}
    Consider removing $k$ arbitrary flights from $P$ to obtain $P'$ and adding $k$ arbitrary flights to $P$ to obtain $P''$. Thereby, we alter the numbers of times two teams have been paired against each other by at most $k$, yielding the inequalities.

    If $P$ is a perfect pairing list and $k=1$, then the inequalities yield
    $\lambda_{\max}(P') - \lambda_{\min}(P'), \lambda_{\max}(P'') - \lambda_{\min}(P'')\leq 1$ and for the assertion to be true we are left to exclude that $P'$ and $P''$ are perfect pairing lists.
    Suppose $P'$ is perfect.
    By Lemma \ref{lem:lambda_formula} it follows that, $\lambda(P) = \frac{\Nflights\cdot (\Ninrace -1)}{\Nteams -1}$ and $\lambda(P') = \frac{(\Nflights -1)\cdot (\Ninrace -1)}{\Nteams -1}$ are integers.
    By
    $$\lambda(P) - \lambda(P') = \frac{\Ninrace-1}{\Nteams -1} \in \Z$$
    this yields that $\Nteams -1$ has to be a divisor of $\Ninrace -1$. This is impossible since $\Nteams > \Ninrace$ yielding a contradiction. A similar argument can be found for $P''$.
\end{proof}

\section{Quadratic and linear programs for the sailing league problem}
\label{sec:programs}

As in our three study cases, there does not exist a perfect pairing list 
for most combinations of possible parameters.
In this section we therefore describe approaches to formulate 
any sailing league problem with given parameters 
as an optimization problem. They can -- at least in principle -- 
be solved with a suitable software tool for operations research. 
We used Gurobi \cite{www-gurobi} and Cplex \cite{www-cplex} to compare the different programs.
We first give a natural boolean quadratic problem formulation and then describe
two different linear programs that appear to be better solvable in practice.
Depending on the given parameters one or the other may perform better,
as it is also shown by our case study in Section~\ref{sec:CaseStudies}.

We identify in the following each 
team, race and flight by its index: We use
$k,l\in\{1,\dots, \Nteams\}$ for team indices and the index $j\in\{1, \dots, \Nraces\}$ 
for the race. Index $i\in\{1, \dots, \Nflights\}$ is used for the flight. 
Furthermore, we assume that the races are organized in such a way, that $\Ninflight$ consecutive races form a flight, i.e., flight $i$ consists of the races with indices $\{(i-1)\cdot \Ninflight +1, \dots, i\cdot \Ninflight\}$. 

Two conditions of a pairing list are already satisfied by the above conventions:
\begin{enumerate}
    \item There are $\Nteams$ teams.
    \item The flights form a partition of the races.
\end{enumerate}

Other conditions are to be modelled in the respective programs below.

\bigskip

\subsection{A boolean quadratic program} \label{sec:BooleanQuadratic} 

Allowing quadratic constraints there is a simple and straightforward
formulation of the sailing league problem.
For it we can use boolean, respectively $0$-$1$ decision variables
$b_{jk}\in \{0,1\}$ indicating if team $k$ participates in race $j$.

We model the condition, that the number of teams in each race has to be  
equal to the constant $\Ninrace$ by 
the following 
linear equations for each race $j$.
Since there is a fixed number of teams in each race, we get the constraints
$$
  \sum_{k=1}^{\Nteams} b_{jk} = \Ninrace
$$
for every race $j$. 

Additionally, we model the condition that every flight forms a partition of the teams (each team is in exactly one race per flight) 
by the following linear equation for every team $k$ and every flight $i$:
$$
  \sum_{j=(i-1)\cdot \Ninflight + 1}^{i\cdot \Ninflight }  
  b_{jk} = 1
$$

\bigskip

In order to set up a linear utility function, 
we use two variables $a$ and $b$ as lower and upper bound for the number of
competitions between two different teams $k$ and $l$ during a sailing league event.
We obtain the two nonlinear quadratic inequalities
\begin{equation}  \label{eqn:banda_bounds}
a \leq
\sum_{j=1}^{\Nraces} b_{jk} \cdot b_{jl}
\leq b
\end{equation}
for every pair of teams $k,l$, where we can assume $l<k$ due to symmetry.
Note that the quadratic $0$-$1$-expressions $b_{jk} \cdot b_{jl}$
indicate if team~$k$ and team~$l$ compete in race~$j$.
Thus we may assume that $a$ and $b$ are integers.

Our goal is now to minimize the linear utility function $b-a$.
Note that for a pairing list that optimizes this program, we have $\lambda_{\max} = b$, $\lambda_{\min} = a$ and $\lambda_{\max} - \lambda_{\min} = b -a$ and thus the program is equivalent to the sailing league problem.

\bigskip

The described quadratically constrained program 
can be implemented in a straightforward way.
However, 
further simplification, respectively a reduction of used variables is possible,
by using some additional symmetry breaking assumptions:
We can require for instance that the teams in race~$1$ 
are the teams $1$, $2$, \ldots $\Ninrace$ and so on...
Thus we may assume
$$
b_{1k} = 1 \; \mbox{for $k\leq \Ninrace$ and $b_{1k} = 0$ otherwise,}
$$
etc..
Additionally we can also require that team~$1$ is in the first race of
every flight.
So $b_{jk} = 1$
for the first race of each flight, i.e. for all $j=(i-1)\cdot \Ninflight + 1$.

\subsection{An integer linear program}   \label{sec:IntegerLinear}  

For practical computations it is usually desirable to get rid of the 
quadratic constraints.
With the $0$-$1$ decision variables in the 
quadratic problem formulation of Section \ref{sec:BooleanQuadratic} 
we can easily set up a purely linear problem formulation.
This has the price of additional variables though:
We may use variables $b_{jkl}$ for the non-linear expressions $b_{jk} \cdot b_{jl}$
in the above quadratic problem formulation.
The $0$-$1$ variables $b_{jkl}$ are thus indicating 
if team~$k$ and team~$l$ compete in race~$j$.

The relation 
$
b_{jkl} = b_{jk} \cdot b_{jl}
$
among the variables can be put into linear constraints
for every race $j$ and every pair of teams $k,l$ with $l<k$:
We use the three linear inequalities
\begin{align}
\label{eq:three_ineq}
b_{jk} + b_{jl} - 1 \; \leq \; b_{jkl} \; \leq \; b_{jk} \; , \; b_{jl}
.
\end{align}
This is the \emph{standard linearization} by Glover and Woolsey \cite{glover1974converting}. 
In it we have a total of
$
3 \cdot \Nraces \cdot {\Nteams \choose 2}
$
additional linear inequalities and
$\Nraces \cdot {\Nteams \choose 2}$ additional variables
$b_{jkl}$. 
We note that these additional variables can be chosen either binary 
or as real, but then with additional bounds $0\leq b_{jkl} \leq 1$.
Then the variables are forced to be $0$ or $1$ by the inequalities \eqref{eq:three_ineq}.
Using real variables appears to be beneficial when used
with the standard parameters of \cite{www-gurobi} or \cite{www-cplex}.

As in the quadratic problem formulation we want to minimize $b-a$
under the now linear side constraints (see \eqref{eqn:banda_bounds})
$$
a \leq
\sum_{j=1}^{\Nraces} b_{jkl}
\leq b,
$$
for every pair of distinct teams $k,l$, with $l<k$.

As a rule of thumb, this linear problem formulation works best 
in practice with current solvers 
for suitable small parameters of the sailing league problem
-- despite its large number of variables.

\subsection{A smaller integer linear program}
\label{sec:small_int_prog}

Since every team competes in every flight only once, we can 
reduce the number of variables in the linear program formulation above.
Instead of using variables $b_{jkl}$ to indicate whether team $k$ competes in race~$j$ against team $l$, we can use $0$-$1$ variables $b_{ikl}$ indicating if
team $k$ competes in flight~$i$ against team $l$.
We require for each flight $i$ and each team $k$ that 
$$
\sum_{l=1, l\not=k}^{\Nteams} b_{ikl} \; = \; \Ninrace - 1
,
$$
that is, the number of teams competing against team $k$ in flight $i$ 
is fixed.
Additionally we require that $b_{ikl}=1$ if $b_{ikm} = 1$ and $b_{ilm}=1$, 
for every flight $i$ and all triples of distinct team indices $k,l,m$.
This can be achieved by linear inequalities
$$
b_{ikm} + b_{ilm} - 1 \; \leq \; b_{ikl}
$$
for every flight $i$ and all possible triples $k,l,m$.
Note that we may use variables $b_{ikl}$ with $l<k$ only in all of these inequalities, 
because of the symmetry $b_{ikl}=b_{ilk}$.

From variables satisfying all the described inequalities we can easily construct a tournament plan. In it, the order of races in each flight can be chosen arbitrarily.
In this way, this second linear problem formulation gives us a solution of a race plan up to symmetry of race numbering in each flight.
As in the other two problem formulations we want to minimize $b-a$,
this time under the linear side constraints (see \eqref{eqn:banda_bounds})
$$
a \leq
\sum_{i=1}^{\Nflights} b_{ikl}
\leq b,
$$
for every pair of distinct teams $k,l$, with $l<k$.

Overall, this second linear problem formulation has the advantage of less variables 
compared with the one before. On the other hand it usually has more side constraints,
as their number grows cubic with the number of teams -- compared to quadratic growth in the other linear problem formulation.
Concretely, in the second linear program we have a number of side constraints in the order 
$O(\Nflights \cdot \Nteams^3)$ while in the first linear program the number is in the order of $O(\Nraces \cdot \Nteams^2)$.
An additional practical disadvantage of this second linear problem formulation is the fact that we can not simply relax the variables $b_{ikl}$ to reals between $0$ and $1$ -- as it is possible for the variables $b_{jkl}$ in the first linear program. Here, in contrast to the first linear program, we do not have inequalities as in \eqref{eq:three_ineq} which force the variables to be $0$ or $1$.

However, together with the advantage of less variables, in our experiments we also discovered a second useful advantage: If we relax some of the variables $b_{ikl}$ to reals between $0$ and $1$, we obtain a possibly smaller minimum $b-a$ that can be a useful lower bound! Since we know that $b-a$ is an integer, such a bound can even be used to prove a tight bound on $b-a$, even if the corresponding $0$-$1$-problem can not be solved.
We use this \emph{relaxation trick} in the next section to give a solution of the 
Asian Pacific Champions League with $16$ flights.

\subsection{Integer linear programs for other sport leagues}  \label{sec:int-refs-sec}

Integer linear programming is probably one of the most used 
techniques in sports league scheduling.
To the best of our knowledge, the sailing league problem has not been
studied in the corresponding OR literature.
Nevertheless, there is a long list of publications using 
in particular integer linear programs to schedule
different sport leagues.
Naturally, in many of these some similarities to our two problem formulations can be found.
Without being near to a complete list, we like to mention the
following publications in this context (in chronological order):
\cite{Robinson1991},
\cite{FleurentFerland1993},
\cite{NemhauserTrick1998},
\cite{Trick2004},
\cite{DellaCroceOliveri2006},
\cite{Rasmussen_2008},
\cite{RasmussenTrick2009},
\cite{KostukWilloughby2012},
\cite{Recalde_etal_2013},
\cite{Alarcon2014},
\cite{GoerigkWestphal2016},
\cite{Duran_etal_2017},
\cite{Cocchi_etal2018},
\cite{Bouzarth_etal2021}
\cite{vBulck_Goossens_2023},
\cite{vDHLS_2023}  
.
We also like to mention that differennt contraint programming and SAT techniques
may work well in some cases, as it is suggested in papers like
\cite{Smith_etal_1996} or
\cite{Baker_etal_2002}.
We ourselves did not test corresponding approaches so far, but we
think this could be a fruitful direction for future research on sailing league problems.

\section{Three Case Studies}   \label{sec:CaseStudies}

Using the tools of Sections~\ref{sec:DesignTheory} and~\ref{sec:programs},
we take a closer look at the three sailing league events mentioned 
in Section~\ref{sec:parameters}.

\subsection{Asian Pacific Sailing Champions League}

We start with the smallest event, involving only $10$ teams.
In 2021 the final event in Newcastle, Australia, 
was realized with the pairing list given in Table~\ref{tab:tournament-plan-SCLAP21}.
A quick analysis of this base pairing list, with $8$~flights and $16$~races, 
reveals that already this schedule is particularly bad for the sailing league
problem.
We note that other criteria were seemingly more important for the schedule,
but it is not clear to us which precicely and why.
Teams~6, 8 and 10, for instance, compete against each other in all 8
flights, while Team~2 competes against these three teams only once. I.e. $\lambda_{\min} = 1$, $\lambda_{\max} = 8$, $\lambda_{\max} - \lambda_{\min} = 7$.
Multiplying the base schedule by 6 --- as it was planned in the actual event --- 
does even amount in 48 versus 6 matches for the same teams.

With the given parameters we can actually solve all linear/quadratic problems presented in Section~\ref{sec:programs} on a standard laptop. Fastest solutions with \cite{www-gurobi}
and \cite{www-cplex} are obtained when we use the first linear program of 
Section~\ref{sec:IntegerLinear}, with as many variables as possible 
relaxed to real variables between $0$ and $1$. Note that not only all the variables 
$b_{jkl}$ can be relaxed in such a way without changing the outcome, 
but also some of the variables $b_{jk}$ that are shared with the quadratic program.
I.e., for every equation we can relax one of the involved variables to be real. 
Overall we obtain a computational proof of the following result:

\begin{theorem}
\label{thm:asian_pacific}
For the parameters $\Nteams = 10$, $\Nflights = 8$ and $\Ninrace = 5$
an optimal tournament plan has a utility value $\lambda_{\max} - \lambda_{\min} = 3$
and satisfies $\lambda_{\min}=2, \lambda_{\max}=5$. 
A corresponding plan is contained in the first eight flights (rows) of
Table~\ref{tab:tournament-plan-10teams-16flights-5inrace}.
\end{theorem}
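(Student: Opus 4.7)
The plan is to split the argument into an upper bound that exhibits a plan with $\lambda_{\max}-\lambda_{\min}=3$ and a lower bound that rules out any smaller value. Together these yield the theorem, with the plan itself read off from the first eight rows of Table~\ref{tab:tournament-plan-10teams-16flights-5inrace}.

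For the upper bound, I would take the pairing list formed by the first eight flights of Table~\ref{tab:tournament-plan-10teams-16flights-5inrace} and verify directly that it is a valid pairing list with the stated parameters. Each row visibly partitions the $10$ teams into two races of size $5$. It then remains to tabulate $\lambda(t,t')$ for each of the $\binom{10}{2}=45$ unordered team pairs---a routine finite computation, easily scripted---and confirm that every value lies in $\{2,3,4,5\}$, with both $2$ and $5$ attained. This immediately gives $\lambda_{\min}=2$, $\lambda_{\max}=5$, and hence $\lambda_{\max}-\lambda_{\min}=3$.

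For the lower bound, the approach is computational via integer linear programming. Corollary~\ref{cor:no_perfect_pairing_list} already excludes $\lambda_{\max}-\lambda_{\min}=0$, since $\lambda=\tfrac{32}{9}\notin\Z$. To rule out $\lambda_{\max}-\lambda_{\min}\le 2$, I would set up the first linear programming formulation of Section~\ref{sec:IntegerLinear} with the additional constraint $b-a\le 2$, impose the symmetry-breaking assumptions from the end of Section~\ref{sec:BooleanQuadratic} (fixing teams $1,\dots,\Ninrace$ to form race~$1$ of flight~$1$ and placing team~$1$ in the first race of every flight), and relax every auxiliary variable $b_{jkl}$, together with one of the $b_{jk}$ per flight-equation, to a real in $[0,1]$. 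Submitting the resulting model to Gurobi and CPLEX should return a certificate of infeasibility in short time on a standard laptop, thereby establishing $\lambda_{\max}-\lambda_{\min}\ge 3$. Combined with the upper bound, this gives the claimed optimum.

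The main obstacle is that the lower bound rests on a solver certificate rather than a hand proof, so it must be produced with care. Three concerns arise: (i) trusting the correctness of the ILP solver, mitigated by running two independent solvers as in the rest of the paper; (ii) ensuring that the relaxations to reals in $[0,1]$ preserve the feasible set---this is guaranteed by the standard linearization inequalities~\eqref{eq:three_ineq}, which force every relaxed variable to take a value in $\{0,1\}$ at any optimum of the linearized program; and (iii) keeping the branch-and-bound tree tractable, which is exactly where the symmetry breaking and the relaxation trick of Section~\ref{sec:small_int_prog} become essential. For full reproducibility I would publish the model files and solver logs alongside the tournament plan.
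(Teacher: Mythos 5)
Your overall strategy --- read off the plan from the first eight rows of Table~\ref{tab:tournament-plan-10teams-16flights-5inrace}, verify it by a finite computation, and certify $\lambda_{\max}-\lambda_{\min}\geq 3$ by an infeasibility run of the relaxed integer linear program of Section~\ref{sec:IntegerLinear} --- is essentially the computational route the paper takes. Two small remarks on your soundness discussion: for the infeasibility (lower bound) direction you do not actually need the relaxation to be exact, since relaxing variables only enlarges the feasible set, so infeasibility of the relaxed model already implies infeasibility of the binary one; and where exactness is needed, the inequalities \eqref{eq:three_ineq} only force the $b_{jkl}$ to be binary --- the relaxed $b_{jk}$ (one per equation) are instead forced integral by the cardinality equations $\sum_k b_{jk}=\Ninrace$ and $\sum_j b_{jk}=1$ once the remaining variables in each equation are binary.

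There is, however, one part of the statement your argument does not cover. The theorem claims that an optimal plan satisfies $\lambda_{\min}=2$, $\lambda_{\max}=5$, and the paper's discussion immediately following the theorem makes explicit that this is intended for \emph{every} optimal plan, not just the exhibited one: since the average $\lambda=32/9=3.555\ldots$ must lie between $\lambda_{\min}$ and $\lambda_{\max}$, a plan with utility value $3$ could a priori realize the profiles $(\lambda_{\min},\lambda_{\max})=(1,4)$, $(2,5)$ or $(3,6)$. Tabulating the pairings of the one plan in the table shows only that $(2,5)$ is attainable; it does not exclude the other two profiles. The paper closes this by additionally running the integer linear programs with $a$ and $b$ fixed to the values $1,4$ and $3,6$ and certifying both infeasible. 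You would need to add these two infeasibility runs (or an equivalent argument) to obtain the full statement as the paper intends it.
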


Note that for the proof it is not enough to compute 
$\lambda_{\max}-\lambda_{\min}=3$ with the given parameters:
Since the average of pairwise competitions in the 
Asian Pacific Champions League is $\lambda=32/9=3.555\ldots$,
it could in principle also be possible that there exist tournament plans with
lower and upper bounds $\lambda_{\min}=1, \lambda_{\max}=4$ or $\lambda_{\min}=3, \lambda_{\max}=6$.
However, corresponding integer linear programs with these values of $a$ and $b$ set fixed turn out to be infeasible, that is, there do not exist corresponding paring lists.

Note that Theorem \ref{thm:asian_pacific} together with Lemma
\ref{lem:add_or_remove_rows}
also implies the nonexistence of a $(10, 9, 4)$-block design as has
been proven previously using combinatorics
(see \cite[Lemma 2.10]{miao1994existence-of-resolvable-bibds}).

\bigskip

The computed optimal schedules with $\lambda_{\min}=2$ and $\lambda_{\max}=5$ are still quite uneven, respectively the minimum of the utility function appears
quite big. This is in particular the case, as the base pairing list was to
be repeated six times. So, in the overall event, some teams would still compete 
only $12$ times against each other, while other pairs compete $30$ times.
This difference can be reduced by using for instance three
times an optimized pairing list with $16$ flights (instead of six times
an optimized pairing list with $8$ flights).
We are able to solve the corresponding sailing league problem:

\begin{theorem}
For the parameters $\Nteams = 10$, $\Nflights = 16$ and $\Ninrace = 5$
an optimal tournament plan has a utility value $\lambda_{\max} - \lambda_{\min} = 2$.
A corresponding plan is contained in 
Table~\ref{tab:tournament-plan-10teams-16flights-5inrace}. 
\end{theorem}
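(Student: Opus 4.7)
The plan is to establish matching upper and lower bounds on $\lambda_{\max}-\lambda_{\min}$ for the parameters $\Nteams=10$, $\Nflights=16$, $\Ninrace=5$.

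For the upper bound $\lambda_{\max}-\lambda_{\min}\leq 2$, I would simply inspect the pairing list encoded in Table~\ref{tab:tournament-plan-10teams-16flights-5inrace}. This amounts to three routine verifications: every row is a function from $\{1,\dots,10\}$ to $\{1,2\}$ taking each value exactly $\Ninrace=5$ times, so every flight is a valid partition of the teams; compute $\lambda(t,t')$ for each of the $\binom{10}{2}=45$ team pairs by counting flights in which both teams lie in the same race; and check that the attained values differ by at most $2$. By Lemma~\ref{lem:lambda_formula} the average is $\lambda=\frac{16\cdot 4}{9}=\frac{64}{9}\approx 7.11$, so the window of width $2$ must be $\{6,7,8\}$ or $\{7,8,9\}$.

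For the lower bound I need to rule out the cases $\lambda_{\max}-\lambda_{\min}\in\{0,1\}$. The perfect case $0$ is excluded by the argument of Corollary~\ref{cor:no_perfect_pairing_list}, since $\lambda=64/9\notin\Z$. For gap $1$, a counting argument forces rigid structure: for any fixed team $t$ we have $\sum_{t'\neq t}\lambda(t,t')=\Nflights(\Ninrace-1)=64$, and if every summand lies in $\{7,8\}$ over nine partners, exactly one summand must equal $8$. Consequently, the pairs of weight $8$ form a perfect matching on the $10$ teams, and by relabelling teams we may assume this matching is $\{1,2\},\{3,4\},\{5,6\},\{7,8\},\{9,10\}$.

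The main obstacle is then to certify that no pairing list with this enforced structure exists. I would take the smaller integer linear program of Section~\ref{sec:small_int_prog}, adjoin the fixed-count constraints $\sum_{i=1}^{16}b_{ikl}=8$ for the five heavy pairs and $\sum_{i=1}^{16}b_{ikl}=7$ for the remaining $40$ pairs, and attempt to prove infeasibility of the resulting $0$-$1$ system with Gurobi or Cplex. If the pure integer run is prohibitively expensive, I would fall back on the \emph{relaxation trick} of Section~\ref{sec:small_int_prog}: relax the $b_{ikl}$ to real variables in $[0,1]$ and show that the linear programming minimum of $b-a$ strictly exceeds $1$, which together with the integrality of $b-a$ forces $b-a\geq 2$. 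The difficulty here is purely computational, since the number of triangle-type constraints scales as $O(\Nflights\cdot\Nteams^3)$; the symmetry breaking afforded by the forced matching and by fixing, say, the races of flight~$1$, will be essential to make the branch-and-bound or LP-relaxation argument go through.
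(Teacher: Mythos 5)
Your overall strategy coincides with the paper's: the upper bound is read off from Table~\ref{tab:tournament-plan-10teams-16flights-5inrace} by checking all $\binom{10}{2}$ pair counts, and the lower bound is delegated to a computational certificate built on the integer program of Section~\ref{sec:small_int_prog}. Your preliminary counting step is correct and is a genuine addition not present in the paper: if the gap were $1$ the window would have to be $\{7,8\}$ (since $\lambda=64/9$), the identity $\sum_{t'\neq t}\lambda(t,t')=\Nflights\cdot(\Ninrace-1)=64$ forces exactly one weight-$8$ partner for each team, and the weight-$8$ pairs therefore form a perfect matching that can be normalized by relabelling. This buys symmetry breaking that the paper does not exploit and could make the pure integer feasibility run tractable.

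The gap is in your fallback. Fully relaxing the $b_{ikl}$ to reals in $[0,1]$, even with the pair counts pinned to $7$ and $8$, cannot certify anything: the constant assignment $b_{ikl}=\lambda(k,l)/16\in\{7/16,\,8/16\}$ satisfies the degree constraints ($\sum_{l\neq k}b_{ikl}=64/16=4=\Ninrace-1$), the triangle inequalities ($b_{ikm}+b_{ilm}-1\leq 0\leq b_{ikl}$) and the count constraints, so the pure LP is feasible and its optimal $b-a$ is at most $1$. The certificate the paper actually uses is strictly stronger than an LP bound: it is a \emph{partial} relaxation in which the $240$ variables involving only teams $1,\dots,6$ remain binary and only the $480$ variables involving teams $7,\dots,10$ are relaxed; the resulting mixed-integer optimum is $1.25>1$, which together with the integrality of $b-a$ gives $b-a\geq 2$. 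So your primary route (integer infeasibility with the fixed counts and the matching normalization) is the one you must make work --- the paper reports that unstructured integer runs did not finish, so success is not guaranteed --- and the LP-only version of your fallback provably fails; you would need to keep a substantial subset of the variables integral, as the paper does. As with any computational proof, the argument is complete only once one of these runs is actually carried out and returns the claimed answer.
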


For the computational proof of this result, we can first find a schedule 
with utility value $2$ as the one in
Table \ref{tab:tournament-plan-10teams-16flights-5inrace}.
A solver finds such a plan comparatively fast or other heuristic strategies can be used.
It is more difficult to prove that utility value $1$ is impossible.
All of the described linear and quadratic programs of Section~\ref{sec:programs}
did not finish on our computers in a reasonable amount of time.
However, a specific relaxation of the linear integer problem presented in Section \ref{sec:small_int_prog} allowed us to obtain a proof of optimality
for the tournament plan in Table \ref{tab:tournament-plan-10teams-16flights-5inrace}:
We allowed the binary variables $b_{ikl}$ 
of the $4$ teams $k=7,8,9,10$ (and all flights $i$)
to be real valued between $0$ and $1$.
Thus of the $720$ binary variables, 
we relaxed a total of $480$ variables to reals
(as we assume $l<k$ there are $30$ variables for each flight $i$).
As any relaxation gives a lower bound to the actual minimum of 
$b-a$, the computed exact minimum of $1.25$ in this relaxation
proves that $b-a\geq 2$ for the actual sailing league problem
(in which $b-a$ is an integer).

\bigskip

We finally note that the problem of finding an optimal
tournament plan for parameters $\Nteams = 10$, $\Ninrace = 5$
and $\Nflights = n\cdot 8$ with $n\geq 3$
($n$~times the base schedule of the Asian Pacific Champions League)
still appears to be open.

\subsection{German Sailing Leagues}

We analyzed a total of $20$ events of the German National Leagues, all with the usual parameters $\Nteams = 18$, $\Nflights = 16$ and $\Ninrace = 6$. 
The utility values for these events were either $2$ or $3$ indicating that the organizers put in some effort to obtain good tournament plans in the sense of the sailing league problem.
We ourselves conducted some heuristical searches for optimal tournament plans 
and tried a bunch of different strategies adapting or relaxing the quadratic and integer programs of Section~\ref{sec:programs}.
In all of these approaches we were not able to obtain a tournament plan 
$\lambda_{\max} - \lambda_{\min} = 1$.
As the average of pairwise competitions in the events of the German national 
Leagues is $\lambda=80/17=4.7058\ldots$ a tournament plan with 
$\lambda_{\max} - \lambda_{\min} = 1$ necessarily satisfies
$a=\lambda_{\max}=5$ and $b=\lambda_{\min}=4$.
We used this additional setting in the corresponding programs without success.

\bigskip

However, in contrast to the problems with a computational solution 
using one of the standard approaches of Section~\ref{sec:programs},
we found a solution using a suitable existence result from design theory:

\begin{theorem}
For the parameters $\Nteams = 18$, $\Nflights = 16$ and $\Ninrace = 6$
an optimal tournament plan has a utility value $\lambda_{\max} - \lambda_{\min} = 1$.
A corresponding plan is obtained by removing an arbitrary flight (row)
of Table~\ref{tab:tournament-plan-18teams-17flights-6inrace}
(cf. \cite{Kageyama_1983},\cite[Table 3]{baker1983resolvable_bibd_and_sols}).
\end{theorem}

A simple proof of this result is  
a direct application of 
Lemma \ref{lem:add_or_remove_rows} with $k=1$.

\subsection{European Sailing Champions League}

The largest problem we analyzed is the European Champions League in 2021
with $32$~slots for teams.
The actual number of teams competing in the event was~$29$, so that three team slots were marked \enquote{empty} in the corresponding tournament plan.
Analyzing the $72$ races of this event, we find that the $29$ competing teams had between $2$ and $7$ matches against each other. 
Thus in this case we may consider a kind of relaxed problem in which only a subset of 
the $\Nteams$ teams in the tournament plan is taken into account when computing the bounds
$a=\lambda_{\max}$ and $b=\lambda_{\min}$.

Unfortunately, we could not find an optimal pairing list for this problem.
Not even in the relaxed case using only $29$ teams or by other approaches.
To find at least a pairing list that yields a good upper bound, we used a perfect pairing list with parameters $\Nteams = 32$, $\Ninrace = 8$ and $\Nflights = 31$ (see also \cite[Lemma 5]{rokowska194resolvable_systems_of_8_tuples}) and then applied a heuristic search method to find the presumably best way to remove $13$ of the flights. The resulting tournament plan is given in Table \ref{tab:tournament-plan-32teams-18flights-8inrace}. Together with 
Corollary \ref{cor:no_perfect_pairing_list} this yields the following result:

\begin{theorem}
For the parameters $\Nteams = 32$, $\Nflights = 18$ and $\Ninrace = 8$
of the European Sailing Champions League Final 2021, an optimal pairing list satisfies
$$1\leq\lambda_{\max} - \lambda_{\min} \leq 3.$$
\end{theorem}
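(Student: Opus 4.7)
The plan is to prove the two inequalities separately. The lower bound $\lambda_{\max} - \lambda_{\min} \geq 1$ is the easier of the two, and the upper bound $\lambda_{\max} - \lambda_{\min} \leq 3$ will be established by an explicit construction followed by direct verification.

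For the lower bound, I invoke Corollary~\ref{cor:no_perfect_pairing_list}: since $\lambda = 126/31$ is not an integer for the given parameters, no perfect pairing list exists, so any pairing list $P$ satisfies $\lambda_{\max}(P) > \lambda_{\min}(P)$. Because $\lambda_{\max}$ and $\lambda_{\min}$ are both integers, this forces $\lambda_{\max}(P) - \lambda_{\min}(P) \geq 1$. This one-line argument is essentially automatic once Corollary~\ref{cor:no_perfect_pairing_list} is in hand.

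For the upper bound, I start from a perfect pairing list $P^\ast$ with parameters $\Nteams = 32$, $\Ninrace = 8$, $\Nflights = 31$, whose existence is guaranteed by Rokowska's construction cited in \cite{rokowska194resolvable_systems_of_8_tuples}. Applying Lemma~\ref{lem:add_or_remove_rows} with $k = 13$ to $P^\ast$ yields a pairing list $P'$ with the desired parameters and only the generic guarantee $\lambda_{\max}(P') - \lambda_{\min}(P') \leq 13$, which is far too weak. The idea is that a careful (rather than arbitrary) choice of which $13$ flights to delete can bring this difference down to~$3$. Concretely, I would run a heuristic search over the $\binom{31}{13}$ subsets of flights to remove (or use local search on this space) to find a particular subset for which the resulting plan achieves $\lambda_{\max} - \lambda_{\min} = 3$. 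The output of this search is displayed as Table~\ref{tab:tournament-plan-32teams-18flights-8inrace}, and the proof reduces to a finite verification: compute $\lambda(t, t')$ for all $\binom{32}{2} = 496$ pairs of teams from that table and check that the values lie in a window of width~$3$.

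The main obstacle is genuinely computational rather than conceptual. Enumerating $\binom{31}{13} \approx 2\cdot 10^8$ subsets is feasible but not trivial, and there is no a priori guarantee that \emph{any} choice of 13 flights to delete brings the utility down to $3$; indeed the paper admits that the true optimum might be $1$ or $2$, and the integer programs of Section~\ref{sec:programs} did not close this gap within reasonable time on this instance. So the honest statement of the theorem is asymmetric: the lower bound is just Corollary~\ref{cor:no_perfect_pairing_list}, while the upper bound is \emph{witnessed} by one explicit table, whose construction uses a design-theoretic seed plus a targeted heuristic search, and whose correctness is a direct finite check.
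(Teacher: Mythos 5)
Your proposal matches the paper's own argument essentially step for step: the lower bound comes from Corollary~\ref{cor:no_perfect_pairing_list} (since $\lambda = 126/31 \notin \Z$), and the upper bound is witnessed by the explicit plan of Table~\ref{tab:tournament-plan-32teams-18flights-8inrace}, obtained by taking Rokowska's perfect pairing list with $\Nflights = 31$ and heuristically selecting $13$ flights to delete, with correctness reduced to a finite check of all $\binom{32}{2}$ pairing counts. Your added remarks on the weakness of the generic bound from Lemma~\ref{lem:add_or_remove_rows} and on the remaining gap between $1$ and $3$ are accurate and consistent with the paper.
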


While we improved the utility value by two, compared with the actual event in 2021, 
so far we can not prove optimality of the tournament plan in 
 Table~\ref{tab:tournament-plan-32teams-18flights-8inrace}.
We finally note that other events of the European Sailing Champions League use various other parameters yielding even more unsolved sailing league problems with relatively big parameters.

\bigskip

We finally want to remark that a natural \emph{greedy approach} to 
solve a given sailing league problem does often not work.
It is tempting to think that one can get a quite good tournament plan
by optimizing the utility function for a few initial flights 
and then sequentially take a solution for a fixed number of flights and
optimize the tournament plan with $n$ additional flights.
This greedy strategy does in particular not work for the parameters of the 
European Sailing Champions League Final 2021.
Here we optimized for $4$ initial flights and obtained a tournament plan with 
utility value $2$. Then we added sequentially two flights to obtain
tournament plans with $6,8,\dots , 18$ flights.
This first lead to tournament plans with utility value $3$, but for $18$ flights the
optimal plan found in this way had utility value $4$ -- even if we 
counted only pairings of $29$ of the $32$ teams in our computations.
Hence, this gives an example where this natural greedy strategy 
produces a result worse than the one 
in Table~\ref{tab:tournament-plan-32teams-18flights-8inrace}.

\section{Additional connections and remarks}
\label{sec:relaxations}

In this section we discuss further information which, while interesting, have not been necessary for the discussion of the three use cases.

\subsection{More connections to design theory}

General pairing lists are strongly connected to the concept of \emph{partially balanced block designs}. 
In particular, a pairing list with parameters $\Nteams$, $\Nflights$, $\Ninrace$ and derived parameter $\lambda$ corresponds to a $(\Nteams, \Ninrace, \lambda)$-resolvable partially balanced block design $(T, R, \Lambda)$ with $m$ associate classes 
where 
$$m - 1 \leq \lambda_{\max} - \lambda_{\min} = \max(\Lambda) - \min(\Lambda).$$



In particular, pairing lists with $\lambda_{\max} - \lambda_{\min} = 1$ correspond to resolvable partially balanced block designs with $2$ associate classes.
More information about resolvable partially balanced block designs can be found, for example, in \cite{saurabh2021survey}.

To the best of our knowledge, resolvable partially balanced block designs are only known for very specific parameters and do not yield useful results for our three example cases.

\subsection{Relaxations}

During the work on this paper, we also thought about different ways of relaxing the definition of a sailing league problem, in order to obtain tournament plans in which the number of pairwise matches does not vary too much. 

The European Champions League in 2021 showed -- with three teams missing in the final -- 
that we could allow the races to be of different size for example,
as effectively done in the event. There are several results in the literature concerning block designs with multiple possible block sizes which could be useful in this setting.


Another possible relaxation is, to allow flights that are not a
partition of the teams. In particular, we could allow pairing lists
such that each flight lets every team compete, except a fixed number
of teams who skip this flight, i.e. the flight is just a partition of
a subset of the teams. If every flight has exactly one team that does
not participate, this is strongly connected to the concept of
\emph{nearly resolvable block designs} (see for example
\cite{Handbook_ChapterResolvableDesigns_2006} or \cite{banerjee2018-nearly-resolvable-designs}).
%
%
Some examples of pairing lists obtained in such a way are given in
Appendix~\ref{app:nearly_resolvable}
(Tables~\ref{tab:tournament-plan-13teams-4groups-nearly-resolvable} and~\ref{tab:tournament-plan-16teams-5groups-nearly-resolvable}).

Note that it is possible, to adapt the quadratic/linear programs of Section \ref{sec:programs} according to these relaxations.

\subsection{Related problem}

The sailing league problem discussed in this paper strictly requires the flights to be a partition of the teams, but principally allows different values for $\lambda(t, t')$. If we strictly require that the pairing list is perfect but allow that the flights might not partition the teams, we get the following related problem:


Let $\Nteams$, $\Ninrace$, $\lambda'$ be positive integer parameters.
A \emph{relaxed pairing list} (with respect to the given parameters) is a $(\Nteams, \Ninrace, \lambda)$-block design with $\lambda \geq \lambda'$ where additionally the races are organized in a list of flights which are partitions of respective subsets of the teams.
One could then ask to find an $(\Nteams, \Ninrace, \lambda')$-relaxed pairing list such that the number of flights is as small as possible.
%
%
In an event planned with this premise, each pair of teams would meet
\emph{exactly} the same number of times $\lambda$ in as few flights as possible.

\section{Conclusions}

In this paper we give an introduction to sailing league problems, depending on 
the number of competing teams, the number of flights (rounds) and the number of teams per race.
We find in particular that recently organized sailing events could be enhanced in terms of the discussed utility function. 
In very few cases of the parameters there exist perfect pairing lists, namely if and only if 
there exists a corresponding resolvable block design 
(see for instance Table \ref{tab:legal_params} and
Appendix~\ref{app:perfect_pairing_lists},
Tables~\ref{tab:tournament-plan-8teams-7flights-4inrace} to~\ref{tab:tournament-plan-20teams-19flights-5inrace}).
If, as in most cases, the parameters of a given sailing league do not 
allow the existence of a perfect pairing list, 
a combination of knowledge from design theory and implementations of linear and quadratic programs can be used to find good or even optimal pairing lists. 
In particular, the tournament plans presented in Appendix~\ref{app:best_known} and Table~\ref{tab:tournament-plan-18teams-17flights-6inrace} could be used for the parameters of the Asian Pacific Champions League, the German Sailing Leagues and the European Sailing Champions Leagues discussed in Section~\ref{sec:CaseStudies}.

Despite the findings of this paper, the sailing league problem in
general seems hard to solve and many parameters of real sailing events
have not been considered yet.
Furthermore, several relaxed and related problems discussed in
Sections~\ref{sec:DesignTheory} and~\ref{sec:relaxations} have not been studied sufficiently.
So further research is necessary to get a more complete picture.
We think in particular, further connections to combinatorial design theory and
more advanced techniques from operations research
could held to improve sailing league schedules in the future.

\section*{Acknowledgements}

We thank Gundram Leifert (ASV Warnem\"unde) for making us familiar
with sailing league problems.
We like to thank Michael Trick for very helpful remarks concerning
connections to operations research, and we like to thank two anonymous
referees for valuable links to the design theory literature.
Furthermore, we like to thank Frieder Ladisch, Valentin Dannenberg and Kerrin Bielser for helpful feedback on an earlier
version, and we like to thank a former class-mate of the first author for a very useful hint.

%
%


\vspace*{-0.5cm}


\renewcommand\refname{}  

\bigskip
\bigskip

\newpage

\appendix

\section{Perfect pairing lists}
\label{app:perfect_pairing_lists}

\begin{table}[h]

  \begin{tabular}{c|cccccccc}

    Flight/Team& 1& 2& 3& 4& 5& 6& 7& 8\\
    \hline
              1& 1& 1& 1& 1& 2& 2& 2& 2\\
              2& 2& 1& 1& 2& 1& 1& 2& 2\\
              3& 2& 2& 1& 1& 1& 2& 1& 2\\
              4& 1& 2& 2& 1& 1& 1& 2& 2\\
              5& 2& 1& 2& 1& 2& 1& 1& 2\\
              6& 1& 2& 1& 2& 2& 1& 1& 2\\
              7& 1& 1& 2& 2& 1& 2& 1& 2\\
  \end{tabular}

  \bigskip

  \caption{The tournament plan of a perfect pairing list for $\Nteams = 8$, $\Nflights = 7$, $\Ninrace = 4$.}
  \label{tab:tournament-plan-8teams-7flights-4inrace}

\end{table}

\begin{table}[h]

  \begin{tabular}{c|ccccccccc}

    Flight/Team& 1& 2& 3& 4& 5& 6& 7& 8& 9\\
    \hline
              1& 1& 1& 1& 2& 3& 2& 3& 2& 3\\
              2& 3& 2& 1& 1& 1& 2& 2& 3& 3\\
              3& 1& 3& 2& 3& 1& 1& 2& 2& 3\\
              4& 1& 2& 3& 1& 2& 3& 1& 2& 3\\
              5& 2& 2& 2& 1& 3& 1& 3& 1& 3\\
              6& 3& 1& 2& 2& 2& 1& 1& 3& 3\\
              7& 2& 3& 1& 3& 2& 2& 1& 1& 3\\
              8& 2& 1& 3& 2& 1& 3& 2& 1& 3\\
  \end{tabular}

  \bigskip

  \caption{The tournament plan of a perfect pairing list for $\Nteams = 9$, $\Nflights = 8$, $\Ninrace = 3$.}
  \label{tab:tournament-plan-9teams-8flights-3inrace}

\end{table}

\begin{table}[h]

  \begin{tabular}{c|cccccccccccc}

    Flight/Team& 1& 2& 3& 4& 5& 6& 7& 8& 9& 10& 11& 12\\
    \hline
              1& 1& 1& 1& 4& 2& 2& 2& 4& 3&  3&  3&  4\\
              2& 4& 1& 1& 1& 4& 2& 2& 2& 4&  3&  3&  3\\
              3& 1& 4& 1& 1& 2& 4& 2& 2& 3&  4&  3&  3\\
              4& 1& 1& 4& 1& 2& 2& 4& 2& 3&  3&  4&  3\\
              5& 2& 3& 4& 1& 2& 3& 4& 1& 2&  3&  4&  1\\
              6& 1& 4& 3& 2& 4& 1& 2& 3& 3&  2&  1&  4\\
              7& 4& 1& 2& 3& 3& 2& 1& 4& 1&  4&  3&  2\\
              8& 3& 2& 1& 4& 1& 4& 3& 2& 4&  1&  2&  3\\
              9& 1& 4& 3& 2& 4& 1& 2& 3& 3&  2&  1&  4\\
             10& 4& 1& 2& 3& 3& 2& 1& 4& 1&  4&  3&  2\\
             11& 3& 2& 1& 4& 1& 4& 3& 2& 4&  1&  2&  3\\
  \end{tabular}

  \bigskip

  \caption{The tournament plan of a perfect pairing list for $\Nteams = 12$, $\Nflights = 11$, $\Ninrace = 3$.}
  \label{tab:tournament-plan-12teams-11flights-3inrace}

\end{table}

\begin{table}[h]

  \begin{tabular}{c|cccccccccccc}

    Flight/Team& 1& 2& 3& 4& 5& 6& 7& 8& 9& 10& 11& 12\\
    \hline
              1& 1& 1& 1& 1& 2& 3& 2& 3& 3&  2&  3&  2\\
              2& 1& 2& 3& 1& 1& 1& 2& 3& 2&  3&  2&  3\\
              3& 1& 3& 2& 3& 2& 1& 1& 1& 2&  3&  3&  2\\
              4& 1& 2& 2& 2& 3& 3& 2& 1& 1&  1&  3&  3\\
              5& 1& 3& 1& 2& 2& 2& 3& 3& 2&  1&  1&  3\\
              6& 1& 3& 2& 1& 3& 2& 2& 2& 3&  3&  1&  1\\
              7& 1& 1& 3& 2& 3& 1& 3& 2& 2&  2&  3&  1\\
              8& 1& 1& 2& 3& 1& 2& 3& 1& 3&  2&  2&  3\\
              9& 1& 3& 3& 2& 1& 3& 1& 2& 3&  1&  2&  2\\
             10& 1& 2& 3& 3& 3& 2& 1& 3& 1&  2&  1&  2\\
             11& 1& 2& 1& 3& 2& 3& 3& 2& 1&  3&  2&  1\\
  \end{tabular}

  \bigskip

  \caption{The tournament plan of a perfect pairing list for $\Nteams = 12$, $\Nflights = 11$, $\Ninrace = 4$.}
  \label{tab:tournament-plan-12teams-11flights-4inrace}

\end{table}

\begin{table}[h]

  \begin{tabular}{c|cccccccccccc}

    Flight/Team& 1& 2& 3& 4& 5& 6& 7& 8& 9& 10& 11& 12\\
    \hline
              1& 1& 1& 1& 1& 1& 1& 2& 2& 2&  2&  2&  2\\
              2& 1& 2& 2& 1& 1& 2& 1& 1& 1&  2&  2&  2\\
              3& 1& 2& 1& 2& 1& 2& 2& 1& 2&  1&  1&  2\\
              4& 1& 1& 2& 1& 2& 2& 2& 1& 2&  1&  2&  1\\
              5& 1& 2& 2& 2& 1& 1& 1& 2& 2&  1&  2&  1\\
              6& 1& 2& 1& 2& 2& 1& 2& 1& 1&  2&  2&  1\\
              7& 1& 2& 2& 1& 2& 1& 2& 2& 1&  1&  1&  2\\
              8& 1& 1& 2& 2& 1& 2& 2& 2& 1&  2&  1&  1\\
              9& 1& 1& 2& 2& 2& 1& 1& 1& 2&  2&  1&  2\\
             10& 1& 1& 1& 2& 2& 2& 1& 2& 1&  1&  2&  2\\
             11& 1& 2& 1& 1& 2& 2& 1& 2& 2&  2&  1&  1\\
  \end{tabular}

  \bigskip

  \caption{The tournament plan of a perfect pairing list for $\Nteams = 12$, $\Nflights = 11$, $\Ninrace = 6$.}
  \label{tab:tournament-plan-12teams-11flights-6inrace}

\end{table}

\begin{table}[h]

  \begin{tabular}{c|cccccccccccccccc}

    Flight/Team& 1& 2& 3& 4& 5& 6& 7& 8& 9& 10& 11& 12& 13& 14& 15& 16\\
    \hline
              1& 1& 1& 1& 1& 2& 3& 4& 2& 3&  4&  2&  3&  4&  2&  3&  4\\
              2& 1& 2& 3& 4& 1& 1& 1& 2& 3&  4&  3&  4&  2&  4&  2&  3\\
              3& 1& 2& 3& 4& 2& 3& 4& 1& 1&  1&  4&  2&  3&  3&  4&  2\\
              4& 1& 2& 3& 4& 3& 4& 2& 4& 2&  3&  1&  1&  1&  2&  3&  4\\
              5& 1& 2& 3& 4& 4& 2& 3& 3& 4&  2&  2&  3&  4&  1&  1&  1\\
  \end{tabular}

  \bigskip

  \caption{The tournament plan of a perfect pairing list for $\Nteams = 16$, $\Nflights = 5$, $\Ninrace = 4$. Repeating this pairing list three times yields The tournament plan of a perfect pairing list for $\Nteams = 16$, $\Nflights = 15$, $\Ninrace = 4$.}
  \label{tab:tournament-plan-16teams-5flights-4inrace}

\end{table}

\begin{table}[h]

	\begin{tabular}{c|cccccccccccccccc}
		Flight/Team& 1& 2& 3& 4& 5& 6& 7& 8& 9& 10& 11& 12& 13& 14& 15& 16\\
		\hline
		1&1&2&1&2&1&2&1&2&1&2&1&2&1&2&1&2\\
		2&1&1&2&2&1&1&2&2&1&1&2&2&1&1&2&2\\
		3&1&2&2&1&1&2&2&1&1&2&2&1&1&2&2&1\\
		4&1&1&1&1&2&2&2&2&1&1&1&1&2&2&2&2\\
		5&1&2&1&2&2&1&2&1&1&2&1&2&2&1&2&1\\
		6&1&1&2&2&2&2&1&1&1&1&2&2&2&2&1&1\\
		7&1&2&2&1&2&1&1&2&1&2&2&1&2&1&1&2\\
		8&1&1&1&1&1&1&1&1&2&2&2&2&2&2&2&2\\
		9&1&2&1&2&1&2&1&2&2&1&2&1&2&1&2&1\\
		10&1&1&2&2&1&1&2&2&2&2&1&1&2&2&1&1\\
		11&1&2&2&1&1&2&2&1&2&1&1&2&2&1&1&2\\
		12&1&1&1&1&2&2&2&2&2&2&2&2&1&1&1&1\\
		13&1&2&1&2&2&1&2&1&2&1&2&1&1&2&1&2\\
		14&1&1&2&2&2&2&1&1&2&2&1&1&1&1&2&2\\
		15&1&2&2&1&2&1&1&2&2&1&1&2&1&2&2&1\\
	\end{tabular}
	
	\bigskip
	
	\caption{The tournament plan of a perfect pairing list for $\Nteams = 16$, $\Nflights = 15$, $\Ninrace = 8$.}
	\label{tab:tournament-plan-16teams-15flights-8inrace}
	
\end{table}

\begin{table}[h]

  \begin{tabular}{c|cccccccccccccccccc}

    Flight/Team& 1& 2& 3& 4& 5& 6& 7& 8& 9& 10& 11& 12& 13& 14& 15& 16& 17& 18\\
    \hline
              1& 1& 1& 1& 1& 1& 1& 2& 2& 2&  2&  2&  2&  3&  3&  3&  3&  3&  3\\
              2& 1& 2& 3& 1& 3& 2& 2& 3& 1&  2&  1&  3&  3&  1&  2&  3&  2&  1\\
              3& 1& 3& 2& 3& 1& 2& 2& 1& 3&  1&  2&  3&  3&  2&  1&  2&  3&  1\\
              4& 1& 3& 3& 2& 2& 1& 2& 1& 1&  3&  3&  2&  3&  2&  2&  1&  1&  3\\
              5& 1& 2& 1& 3& 2& 3& 2& 3& 2&  1&  3&  1&  3&  1&  3&  2&  1&  2\\
              6& 1& 1& 1& 1& 1& 1& 2& 2& 2&  2&  2&  2&  3&  3&  3&  3&  3&  3\\
              7& 1& 2& 3& 1& 3& 2& 2& 3& 1&  2&  1&  3&  3&  1&  2&  3&  2&  1\\
              8& 1& 3& 2& 3& 1& 2& 2& 1& 3&  1&  2&  3&  3&  2&  1&  2&  3&  1\\
              9& 1& 3& 3& 2& 2& 1& 2& 1& 1&  3&  3&  2&  3&  2&  2&  1&  1&  3\\
             10& 1& 2& 1& 3& 2& 3& 2& 3& 2&  1&  3&  1&  3&  1&  3&  2&  1&  2\\
             11& 1& 1& 2& 2& 3& 3& 1& 1& 2&  2&  3&  3&  1&  1&  2&  2&  3&  3\\
             12& 1& 3& 3& 2& 1& 2& 1& 3& 3&  2&  1&  2&  1&  3&  3&  2&  1&  2\\
             13& 1& 2& 1& 3& 3& 2& 1& 2& 1&  3&  3&  2&  1&  2&  1&  3&  3&  2\\
             14& 1& 2& 3& 1& 2& 3& 1& 2& 3&  1&  2&  3&  1&  2&  3&  1&  2&  3\\
             15& 1& 3& 2& 3& 2& 1& 1& 3& 2&  3&  2&  1&  1&  3&  2&  3&  2&  1\\
             16& 1& 1& 2& 2& 3& 3& 2& 2& 3&  3&  1&  1&  3&  3&  1&  1&  2&  2\\
             17& 1& 1& 2& 2& 3& 3& 2& 2& 3&  3&  1&  1&  3&  3&  1&  1&  2&  2\\
  \end{tabular}

  \bigskip

  \caption{The tournament plan of a perfect pairing list for $\Nteams = 18$, $\Nflights = 17$, $\Ninrace = 6$. Removing one flight gives us an optimal pairing list for the parameters of the German-Sailing-Leagues.}
  \label{tab:tournament-plan-18teams-17flights-6inrace}

\end{table}

\begin{table}[h]

  \begin{tabular}{c|cccccccccccccccccccc}

    Flight/Team& 1& 2& 3& 4& 5& 6& 7& 8& 9& 10& 11& 12& 13& 14& 15& 16& 17& 18& 19& 20\\
    \hline
              1& 1& 2& 5& 3& 1& 4& 5& 5& 4&  4&  1&  3&  3&  5&  2&  4&  3&  1&  2&  2\\
              2& 1& 3& 5& 3& 4& 1& 5& 5& 1&  4&  4&  2&  3&  5&  3&  4&  2&  1&  2&  2\\
              3& 1& 3& 5& 1& 4& 3& 5& 5& 3&  2&  4&  4&  1&  5&  3&  2&  4&  1&  2&  2\\
              4& 1& 5& 4& 4& 1& 3& 3& 1& 4&  5&  2&  5&  3&  2&  2&  4&  3&  5&  2&  1\\
              5& 1& 5& 1& 4& 4& 2& 3& 4& 1&  5&  3&  5&  3&  2&  3&  4&  2&  5&  2&  1\\
              6& 1& 5& 3& 2& 4& 4& 1& 4& 3&  5&  3&  5&  1&  2&  3&  2&  4&  5&  2&  1\\
              7& 1& 1& 3& 3& 5& 4& 4& 2& 5&  3&  1&  4&  5&  1&  2&  4&  3&  2&  5&  2\\
              8& 1& 4& 2& 3& 5& 1& 4& 3& 5&  3&  4&  1&  5&  1&  3&  4&  2&  2&  5&  2\\
              9& 1& 4& 4& 1& 5& 3& 2& 3& 5&  1&  4&  3&  5&  1&  3&  2&  4&  2&  5&  2\\
             10& 1& 1& 4& 5& 2& 5& 3& 1& 3&  3&  5&  4&  4&  2&  2&  4&  3&  2&  1&  5\\
             11& 1& 4& 1& 5& 3& 5& 3& 4& 2&  3&  5&  1&  4&  2&  3&  4&  2&  2&  1&  5\\
             12& 1& 4& 3& 5& 3& 5& 1& 4& 4&  1&  5&  3&  2&  2&  3&  2&  4&  2&  1&  5\\
             13& 5& 2& 4& 3& 2& 4& 3& 1& 3&  4&  1&  3&  4&  2&  5&  5&  5&  1&  1&  2\\
             14& 5& 3& 1& 3& 3& 1& 3& 4& 2&  4&  4&  2&  4&  2&  5&  5&  5&  1&  1&  2\\
             15& 5& 3& 3& 1& 3& 3& 1& 4& 4&  2&  4&  4&  2&  2&  5&  5&  5&  1&  1&  2\\
             16& 1& 1& 1& 1& 2& 4& 3& 5& 3&  4&  3&  2&  5&  2&  4&  2&  5&  3&  4&  5\\
             17& 1& 2& 5& 3& 3& 2& 4& 1& 1&  1&  5&  4&  2&  2&  4&  5&  3&  3&  5&  4\\
             18& 1& 3& 4& 5& 4& 2& 3& 2& 5&  4&  1&  1&  1&  5&  5&  2&  3&  2&  3&  4\\
             19& 5& 4& 2& 3& 5& 5& 5& 3& 4&  1&  1&  3&  2&  1&  2&  4&  1&  2&  3&  4\\
  \end{tabular}

  \bigskip

  \caption{The tournament plan of a perfect pairing list for $\Nteams = 20$, $\Nflights = 19$, $\Ninrace = 4$.}
  \label{tab:tournament-plan-20teams-19flights-4inrace}

\end{table}

\begin{table}[h]

  \begin{tabular}{c|cccccccccccccccccccc}

    Flight/Team& 1& 2& 3& 4& 5& 6& 7& 8& 9& 10& 11& 12& 13& 14& 15& 16& 17& 18& 19& 20\\
    \hline
              1& 1& 1& 2& 1& 1& 3& 2& 2& 3&  3&  4&  2&  4&  2&  3&  1&  4&  3&  4&  4\\
              2& 1& 4& 1& 2& 1& 1& 3& 2& 2&  3&  3&  4&  2&  4&  2&  3&  1&  4&  3&  4\\
              3& 1& 4& 4& 1& 2& 1& 1& 3& 2&  2&  3&  3&  4&  2&  4&  2&  3&  1&  4&  3\\
              4& 1& 3& 4& 4& 1& 2& 1& 1& 3&  2&  2&  3&  3&  4&  2&  4&  2&  3&  1&  4\\
              5& 1& 4& 3& 4& 4& 1& 2& 1& 1&  3&  2&  2&  3&  3&  4&  2&  4&  2&  3&  1\\
              6& 1& 1& 4& 3& 4& 4& 1& 2& 1&  1&  3&  2&  2&  3&  3&  4&  2&  4&  2&  3\\
              7& 1& 3& 1& 4& 3& 4& 4& 1& 2&  1&  1&  3&  2&  2&  3&  3&  4&  2&  4&  2\\
              8& 1& 2& 3& 1& 4& 3& 4& 4& 1&  2&  1&  1&  3&  2&  2&  3&  3&  4&  2&  4\\
              9& 1& 4& 2& 3& 1& 4& 3& 4& 4&  1&  2&  1&  1&  3&  2&  2&  3&  3&  4&  2\\
             10& 1& 2& 4& 2& 3& 1& 4& 3& 4&  4&  1&  2&  1&  1&  3&  2&  2&  3&  3&  4\\
             11& 1& 4& 2& 4& 2& 3& 1& 4& 3&  4&  4&  1&  2&  1&  1&  3&  2&  2&  3&  3\\
             12& 1& 3& 4& 2& 4& 2& 3& 1& 4&  3&  4&  4&  1&  2&  1&  1&  3&  2&  2&  3\\
             13& 1& 3& 3& 4& 2& 4& 2& 3& 1&  4&  3&  4&  4&  1&  2&  1&  1&  3&  2&  2\\
             14& 1& 2& 3& 3& 4& 2& 4& 2& 3&  1&  4&  3&  4&  4&  1&  2&  1&  1&  3&  2\\
             15& 1& 2& 2& 3& 3& 4& 2& 4& 2&  3&  1&  4&  3&  4&  4&  1&  2&  1&  1&  3\\
             16& 1& 3& 2& 2& 3& 3& 4& 2& 4&  2&  3&  1&  4&  3&  4&  4&  1&  2&  1&  1\\
             17& 1& 1& 3& 2& 2& 3& 3& 4& 2&  4&  2&  3&  1&  4&  3&  4&  4&  1&  2&  1\\
             18& 1& 1& 1& 3& 2& 2& 3& 3& 4&  2&  4&  2&  3&  1&  4&  3&  4&  4&  1&  2\\
             19& 1& 2& 1& 1& 3& 2& 2& 3& 3&  4&  2&  4&  2&  3&  1&  4&  3&  4&  4&  1\\
  \end{tabular}

  \bigskip

  \caption{A perfect tournament plan for $\Nteams = 20$, $\Nflights = 19$, $\Ninrace = 5$.}
  \label{tab:tournament-plan-20teams-19flights-5inrace}

\end{table}

\section{Pairing lists from nearly resolvable designs}
\label{app:nearly_resolvable}

\begin{table}[h]

	\begin{tabular}{c|ccccccccccccc}
		Description&1&2&3&4&5&6&7&8&9&10&11&12&13\\
		\hline
		1&1&1&2&3&4&2&4&3&2&1&3&4&0\\
		2&0&1&4&2&3&3&2&4&2&1&3&4&1\\
		3&1&0&3&4&2&4&3&2&2&1&3&4&1\\
		4&2&3&0&4&4&1&3&2&1&4&3&2&1\\
		5&2&3&4&0&4&3&2&1&2&4&1&3&1\\
		6&2&3&4&4&0&2&1&3&3&4&2&1&1\\
		7&2&3&1&3&2&0&4&4&1&4&2&3&1\\
		8&2&3&3&2&1&4&0&4&2&4&3&1&1\\
		9&2&3&2&1&3&4&4&0&3&4&1&2&1\\
		10&2&3&1&2&3&1&2&3&0&4&4&4&1\\
		11&1&1&2&2&2&3&3&3&4&0&4&4&1\\
		12&2&3&3&1&2&2&3&1&4&4&0&4&1\\
		13&2&3&2&3&1&3&1&2&4&4&4&0&1\\
	\end{tabular}
	
	\bigskip

	\caption{A pairing list obtained from a nearly resolvable design for $\Nteams = 13$, $\Nflights = 13$, $\Ninrace = 3$.
	A zero indicates that the respective team skips the respective flight.}
	\label{tab:tournament-plan-13teams-4groups-nearly-resolvable}

\end{table}

\begin{table}[h]

	\begin{tabular}{c|cccccccccccccccc}
		Flight/Team&1&2&3&4&5&6&7&8&9&10&11&12&13&14&15&16\\
		\hline
		1&1&1&1&2&3&4&5&2&5&3&4&2&4&5&3&0\\
		2&0&1&1&2&3&4&5&4&3&5&2&5&3&2&4&1\\
		3&1&0&1&2&3&4&5&5&2&4&3&3&5&4&2&1\\
		4&1&1&0&2&3&4&5&3&4&2&5&4&2&3&5&1\\
		5&2&3&4&0&5&5&5&1&3&4&2&1&4&2&3&1\\
		6&2&3&4&5&0&5&5&4&2&1&3&3&2&4&1&1\\
		7&2&3&4&5&5&0&5&2&4&3&1&4&1&3&2&1\\
		8&2&3&4&5&5&5&0&3&1&2&4&2&3&1&4&1\\
		9&2&3&4&1&4&2&3&0&5&5&5&1&3&4&2&1\\
		10&2&3&4&3&2&4&1&5&0&5&5&4&2&1&3&1\\
		11&2&3&4&4&1&3&2&5&5&0&5&2&4&3&1&1\\
		12&2&3&4&2&3&1&4&5&5&5&0&3&1&2&4&1\\
		13&2&3&4&1&3&4&2&1&4&2&3&0&5&5&5&1\\
		14&2&3&4&4&2&1&3&3&2&4&1&5&0&5&5&1\\
		15&2&3&4&2&4&3&1&4&1&3&2&5&5&0&5&1\\
		16&2&3&4&3&1&2&4&2&3&1&4&5&5&5&0&1\\
	\end{tabular}
	
	\bigskip

	\caption{A pairing list obtained from a nearly resolvable design for $\Nteams = 16$, $\Nflights = 16$ and $\Ninrace = 3$. A zero indicates that the respective team skips the respective flight.}
	\label{tab:tournament-plan-16teams-5groups-nearly-resolvable}

\end{table}

%
%
%


\newpage

\section{Tournament plans with best known utility}
\label{app:best_known}

\begin{table}[h]
  \setlength{\tabcolsep}{1pt}
  \begin{tabular}{c|cccccccccccccccccccccccccccccccc}

    Flight/Team& 1& 2& 3& 4& 5& 6& 7& 8& 9& 10& 11& 12& 13& 14& 15& 16& 17& 18& 19& 20& 21& 22& 23& 24& 25& 26& 27& 28& 29& 30& 31& 32\\
    \hline
              1& 1& 1& 1& 1& 1& 1& 1& 1& 2&  2&  2&  2&  2&  2&  2&  2&  3&  3&  3&  3&  3&  3&  3&  3&  4&  4&  4&  4&  4&  4&  4&  4\\
              2& 1& 1& 1& 1& 1& 1& 1& 1& 2&  2&  2&  2&  2&  2&  2&  2&  3&  3&  3&  3&  3&  3&  3&  3&  4&  4&  4&  4&  4&  4&  4&  4\\
              3& 1& 1& 2& 2& 3& 3& 4& 4& 1&  1&  3&  3&  4&  4&  2&  2&  1&  1&  4&  4&  2&  2&  3&  3&  1&  1&  2&  2&  3&  3&  4&  4\\
              4& 1& 1& 2& 2& 3& 3& 4& 4& 3&  3&  1&  1&  2&  2&  4&  4&  4&  4&  1&  1&  3&  3&  2&  2&  2&  2&  1&  1&  4&  4&  3&  3\\
              5& 1& 1& 2& 2& 3& 3& 4& 4& 4&  4&  2&  2&  1&  1&  3&  3&  2&  2&  3&  3&  1&  1&  4&  4&  3&  3&  4&  4&  1&  1&  2&  2\\
              6& 1& 1& 2& 2& 3& 3& 4& 4& 2&  2&  4&  4&  3&  3&  1&  1&  3&  3&  2&  2&  4&  4&  1&  1&  4&  4&  3&  3&  2&  2&  1&  1\\
              7& 1& 2& 1& 2& 3& 4& 3& 4& 1&  3&  1&  3&  4&  2&  4&  2&  1&  4&  1&  4&  2&  3&  2&  3&  1&  2&  1&  2&  3&  4&  3&  4\\
              8& 1& 2& 1& 2& 3& 4& 3& 4& 3&  1&  3&  1&  2&  4&  2&  4&  4&  1&  4&  1&  3&  2&  3&  2&  2&  1&  2&  1&  4&  3&  4&  3\\
              9& 1& 2& 1& 2& 3& 4& 3& 4& 4&  2&  4&  2&  1&  3&  1&  3&  2&  3&  2&  3&  1&  4&  1&  4&  3&  4&  3&  4&  1&  2&  1&  2\\
             10& 1& 2& 1& 2& 3& 4& 3& 4& 2&  4&  2&  4&  3&  1&  3&  1&  3&  2&  3&  2&  4&  1&  4&  1&  4&  3&  4&  3&  2&  1&  2&  1\\
             11& 1& 2& 2& 1& 3& 4& 4& 3& 1&  3&  3&  1&  4&  2&  2&  4&  1&  4&  4&  1&  2&  3&  3&  2&  1&  2&  2&  1&  3&  4&  4&  3\\
             12& 1& 2& 2& 1& 3& 4& 4& 3& 3&  1&  1&  3&  2&  4&  4&  2&  4&  1&  1&  4&  3&  2&  2&  3&  2&  1&  1&  2&  4&  3&  3&  4\\
             13& 1& 2& 2& 1& 3& 4& 4& 3& 4&  2&  2&  4&  1&  3&  3&  1&  2&  3&  3&  2&  1&  4&  4&  1&  3&  4&  4&  3&  1&  2&  2&  1\\
             14& 1& 2& 2& 1& 3& 4& 4& 3& 2&  4&  4&  2&  3&  1&  1&  3&  3&  2&  2&  3&  4&  1&  1&  4&  4&  3&  3&  4&  2&  1&  1&  2\\
             15& 1& 2& 3& 4& 1& 2& 3& 4& 2&  4&  3&  1&  2&  4&  3&  1&  3&  2&  4&  1&  3&  2&  4&  1&  4&  3&  2&  1&  4&  3&  2&  1\\
             16& 1& 2& 3& 4& 2& 1& 4& 3& 2&  4&  3&  1&  4&  2&  1&  3&  3&  2&  4&  1&  2&  3&  1&  4&  4&  3&  2&  1&  3&  4&  1&  2\\
             17& 1& 2& 3& 4& 3& 4& 1& 2& 4&  2&  1&  3&  1&  3&  4&  2&  2&  3&  1&  4&  1&  4&  2&  3&  3&  4&  1&  2&  1&  2&  3&  4\\
             18& 1& 2& 3& 4& 4& 3& 2& 1& 2&  4&  3&  1&  1&  3&  4&  2&  3&  2&  4&  1&  1&  4&  2&  3&  4&  3&  2&  1&  1&  2&  3&  4\\
  \end{tabular}

  \bigskip

  \caption{A tournament plan for $\Nteams = 32$, $\Nflights = 18$, $\Ninrace = 8$ with $\lambda_{\max} - \lambda_{\min} = 3$.}
  \label{tab:tournament-plan-32teams-18flights-8inrace}

\end{table}

\begin{table}[h]

  \begin{tabular}{c|cccccccccc}

    Flight/Team& 1& 2& 3& 4& 5& 6& 7& 8& 9& 10\\
    \hline
              1& 1& 2& 1& 2& 1& 1& 2& 1& 2&  2\\
              2& 1& 1& 2& 2& 1& 2& 1& 1& 2&  2\\
              3& 1& 1& 1& 1& 2& 2& 2& 2& 1&  2\\
              4& 1& 1& 2& 1& 1& 2& 2& 2& 2&  1\\
              5& 1& 2& 1& 1& 2& 2& 2& 1& 2&  1\\
              6& 1& 2& 2& 1& 2& 1& 1& 1& 2&  2\\
              7& 1& 1& 2& 2& 2& 1& 2& 1& 1&  2\\
              8& 1& 1& 2& 2& 2& 1& 1& 2& 2&  1\\
              9& 1& 2& 2& 1& 1& 1& 2& 2& 1&  2\\
             10& 1& 2& 1& 2& 1& 2& 1& 2& 1&  2\\
             11& 1& 2& 2& 2& 1& 2& 2& 1& 1&  1\\
             12& 1& 1& 2& 1& 2& 1& 2& 2& 1&  2\\
             13& 1& 2& 1& 2& 2& 1& 2& 2& 1&  1\\
             14& 1& 2& 2& 1& 2& 2& 1& 1& 1&  2\\
             15& 1& 1& 1& 1& 2& 2& 1& 2& 2&  2\\
             16& 1& 1& 2& 2& 2& 1& 2& 1& 2&  1\\
  \end{tabular}

  \bigskip

  \caption{A tournament plan for $\Nteams = 10$, $\Nflights = 16$,
    $\Ninrace = 5$ with $\lambda_{\max} - \lambda_{\min} = 2$.
    The first eight rows are an optimal tournament plan for $\Nteams = 10$, $\Nflights = 8$, $\Ninrace = 5$.}
  \label{tab:tournament-plan-10teams-16flights-5inrace}

\end{table}

\end{document}